\documentclass{amsart}
\usepackage{amsfonts,amscd,amsthm,amsgen,amsmath,amssymb}
\usepackage[all]{xy}
\usepackage[vcentermath]{youngtab}
\newtheorem{theorem}{Theorem}[section]

\theoremstyle{definition}
\newtheorem{definition}[theorem]{Definition}

\theoremstyle{remark}
\newtheorem{remark}[theorem]{Remark}

\numberwithin{equation}{section}



\begin{document}

\title{A Coboundary Morphism For The Grothendieck Spectral Sequence}
\author{David Baraglia}

\address{Mathematical sciences institute, The Australian National University, Canberra ACT 0200, Australia}


\email{david.baraglia@anu.edu.au}

\thanks{This work is supported by the Australian Research Council Discovery Project DP110103745.}

\subjclass[2010]{Primary 18G40, 18G10; Secondary 55Txx}

\date{\today}


\begin{abstract}
Given an abelian category $\mathcal{A}$ with enough injectives we show that a short exact sequence of chain complexes of objects in $\mathcal{A}$ gives rise to a short exact sequence of Cartan-Eilenberg resolutions. Using this we construct coboundary morphisms between Grothendieck spectral sequences associated to objects in a short exact sequence. We show that the coboundary preserves the filtrations associated with the spectral sequences and give an application of these result to filtrations in sheaf cohomology.

\end{abstract}

\maketitle


\section{Introduction}

Whenever spectral sequences occur in practice there is usually a corresponding filtration on the objects one would like to compute. The associated graded objects of the filtration coincide with the limit of the spectral sequence. There are also situations where such filtrations are of interest independent from the spectral sequence. For example this is true of the filtration associated to the Leray spectral sequence. In this paper we establish a general result allowing us to compare the filtrations associated to certain spectral sequences. Specifically we are concerned with the behavior of the Grothendieck spectral sequence on short exact sequences and the implications this has on the associated filtrations.\\

Let us recall the content of the Grothendieck spectral sequence. Let $\mathcal{A},\mathcal{B},\mathcal{C}$ be abelian categories, $F : \mathcal{A} \to \mathcal{B}$, $G : \mathcal{B} \to \mathcal{C}$ left exact functors. Suppose $\mathcal{A},\mathcal{B}$ have enough injectives and $F$ sends injective objects to $G$-acyclic objects. Given an object $A \in \mathcal{A}$ there is a spectral sequence $\{ E_r^{p,q}(A) , d_r \}$ consisting of objects in $\mathcal{C}$ and filtration $F^p R^n(G \circ F)(A)$ on $R^n(G \circ F)$ such that the spectral sequence converges to the associated graded objects and such that $E_2^{p,q}(A) = R^p G( R^q F (A))$. The details of the spectral sequence and filtration along with our notation are described in Section \ref{gss}.

Now it is clear from the construction of the Grothendieck spectral sequence that given two objects $A,B \in \mathcal{A}$ and a morphism $A \to B$ the induced maps $R^n(G \circ F)(A) \to R^n(G \circ F)(B)$ respect the filtrations  and the maps induced on the associated graded objects come from a morphism $E_r^{p,q}(A) \to E_r^{p,q}(B)$ of spectral sequences. The maps $E_2^{p,q}(A) \to E_2^{p,q}(B)$ are of course just the induced morphisms $R^p G( R^q F(A)) \to R^pG (R^q F(B))$. We thus have a good understanding of how the spectral sequences of $A$ and $B$ are related.

Consider now a short exact sequence $0 \to A \to B \to C \to 0$ in $\mathcal{A}$. The main question we seek to answer is how the spectral sequences and filtrations associated to $A$ and $C$ are related. Of course we get corresponding morphisms $E_r^{p,q}(A) \to E_r^{p,q}(B)$ and $E_r^{p,q}(B) \to E_r^{p,q}(C)$ but the composition $E_r^{p,q}(A) \to E_r^{p,q}(B) \to E_r^{p,q}(C)$ is trivial, so this does not help to directly relate $E_r^{p,q}(A)$ and $E_r^{p,q}(C)$. Our main result, Theorem \ref{main} is that there is a kind of coboundary morphism of spectral sequences $E_r^{p,q}(C) \to E_r^{p,q+1}(A)$ and closely related to this is the fact that the coboundary maps $R^n(G \circ F)(C) \to R^{n+1}(G \circ F)(A)$ in the long exact sequence associated to $0 \to A \to B \to C \to 0$ and $G \circ F$ respects the filtrations. We state the full result:
\begin{theorem}
Let $0 \to A \to B \to C \to 0$ be a short exact sequence in $\mathcal{A}$. There are morphisms $\delta_r : E_r^{p,q}(C) \to E_r^{p,q+1}(A)$ for $r \ge 2$ between the Grothendieck spectral sequences for $C$ and $A$ with the following properties:
\begin{itemize}
\item{$\delta_r$ commutes with the differentials $d_r$ and the induced map at the $(r+1)$-stage is $\delta_{r+1}$.}
\item{$\delta_2 : R^pG( R^q F(C)) \to R^pG( R^{q+1}F(A))$ is the map induced by the boundary morphism $R^qF(C) \to R^{q+1}F(A)$ in the long exact sequence of derived functors of $F$ associated to $0 \to A \to B \to C \to 0$.}
\item{The boundaries $R^n(G \circ F)(C) \to R^{n+1}(G \circ F)(A)$ for the long exact sequence associated to $G \circ F$ send $F^p R^n(G \circ F)(C)$ to $F^p R^{n+1}(G \circ F)(A)$ and thus induce maps $E_\infty^{p,q}(C) \to E_\infty^{p,q+1}(A)$. These maps coincide with $\delta_\infty$ where $\delta_\infty$ denotes the limit of the $\delta_r$.}
\end{itemize}
\end{theorem}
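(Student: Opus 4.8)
The plan is to realise all three Grothendieck spectral sequences from a single coherent choice of resolutions, so that the short exact sequence $0\to A\to B\to C\to 0$ is visible already at the level of the double complexes that define the spectral sequences. First I regard $A,B,C$ as cochain complexes concentrated in degree $0$ and apply the short exact sequence of Cartan--Eilenberg resolutions constructed earlier in the paper; since a Cartan--Eilenberg resolution of a complex concentrated in degree $0$ is simply an injective resolution, this produces injective resolutions $A\to I_A^{\bullet}$, $B\to I_B^{\bullet}$, $C\to I_C^{\bullet}$ together with a short exact sequence $0\to I_A^{\bullet}\to I_B^{\bullet}\to I_C^{\bullet}\to 0$ that is split in each degree (automatic, as the terms are injective). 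Applying the additive functor $F$ gives a short exact sequence of complexes $0\to F(I_A^{\bullet})\to F(I_B^{\bullet})\to F(I_C^{\bullet})\to 0$ in $\mathcal{B}$, still split in each degree. Applying the Cartan--Eilenberg result a second time, now to this short exact sequence of \emph{complexes} in $\mathcal{B}$, yields Cartan--Eilenberg resolutions $J_A^{\bullet,\bullet},J_B^{\bullet,\bullet},J_C^{\bullet,\bullet}$ and a short exact sequence $0\to J_A^{\bullet,\bullet}\to J_B^{\bullet,\bullet}\to J_C^{\bullet,\bullet}\to 0$ that is split in each bidegree. Finally, applying $G$ and passing to total complexes (finite direct sums, as everything is first quadrant) produces a short exact sequence of filtered complexes
\[
0\longrightarrow K_A\longrightarrow K_B\longrightarrow K_C\longrightarrow 0,\qquad K_X:=\mathrm{Tot}\,G(J_X^{\bullet,\bullet}),
\]
where $K_X$ carries the filtration whose spectral sequence is $E_r^{p,q}(X)$ and whose cohomology is $R^{\bullet}(G\circ F)(X)$ with its filtration, exactly as recalled in Section \ref{gss}.

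Next I extract the coboundary from this data. Since the sequence of double complexes is split in each bidegree and $G$ is additive, I may write $K_B\cong K_A\oplus K_C$ as \emph{bigraded} objects, with differential in block form
\[
d_B=\begin{pmatrix} d_A & \partial\\ 0 & d_C\end{pmatrix},
\]
and $d_B^2=0$ forces $d_A\partial=-\partial d_C$, so $\partial$ is, up to sign, a chain map $K_C\to K_A[1]$ (this is the \emph{twisted mapping cone} presentation of $K_B$). The essential point is that the splitting is of bigraded objects, so $\partial$ decomposes as $\partial=\partial^{\mathrm{I}}+\partial^{\mathrm{II}}$ with $\partial^{\mathrm{I}}$ raising the first degree by one and $\partial^{\mathrm{II}}$ the second; in particular $\partial$ preserves the filtration of $K_C$, and it is strict for it because the filtration of $K_B$ is the direct sum of those of $K_A$ and $K_C$. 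A filtration-preserving chain map induces a morphism of spectral sequences, giving $\delta_r$ for all $r$ (in particular $r\ge 2$), commuting with $d_r$ and compatible from one page to the next. A bookkeeping of degrees, using that the $p$-th graded piece of $K_A[1]$ in total degree $n$ is $G(J_A^{p,\,n+1-p})$, identifies $E_r(K_A[1])$ with the spectral sequence $E_r^{p,q+1}(A)$; hence $\delta_r\colon E_r^{p,q}(C)\to E_r^{p,q+1}(A)$, which is the first bullet.

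The remaining two bullets are obtained by tracing $\partial$ through the construction. On cohomology, $\partial$ induces the connecting homomorphism $H^{n}(K_C)\to H^{n+1}(K_A)$ of the short exact sequence $0\to K_A\to K_B\to K_C\to 0$, which under the identifications $H^{\bullet}(K_X)=R^{\bullet}(G\circ F)(X)$ is precisely the coboundary $R^{n}(G\circ F)(C)\to R^{n+1}(G\circ F)(A)$; since $\partial$ preserves the filtration, this coboundary sends $F^pR^{n}(G\circ F)(C)$ into $F^pR^{n+1}(G\circ F)(A)$, and passing to associated graded objects gives exactly $\delta_{\infty}$, which is the third bullet. For the second bullet one computes $E_2$ by first taking cohomology in the Cartan--Eilenberg direction: there the map induced by $\partial$ is induced by $\partial^{\mathrm{I}}$ alone, since $\partial^{\mathrm{II}}$ strictly raises the relevant filtration degree, and by construction of the Cartan--Eilenberg resolution of $0\to F(I_A^{\bullet})\to F(I_B^{\bullet})\to F(I_C^{\bullet})\to 0$ the map $\partial^{\mathrm{I}}$ is a morphism of the corresponding injective resolutions realising the connecting morphism $R^qF(C)\to R^{q+1}F(A)$ in the long exact sequence of derived functors of $F$; applying $G$ and taking cohomology in the remaining direction then exhibits $\delta_2$ as $R^pG$ applied to that connecting morphism.

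The main obstacle I expect lies entirely in the two foundational inputs rather than in the formal extraction above. First, one must know that the short exact sequence of Cartan--Eilenberg resolutions is split in each bidegree, so that it survives application of the additive functors $F$ and $G$; this is exactly the purpose of the structural result proved earlier, but it has to be invoked for a short exact sequence of complexes, not merely of objects, and the compatibility of the two successive resolution steps must be checked. Second, and relatedly, one must verify that the induced splitting of $K_B$ is a splitting of \emph{bigraded} objects and not merely of complexes in the total grading --- it is this that makes $\partial^{\mathrm{I}}$ and $\partial^{\mathrm{II}}$ well defined and identifies $\delta_2$ with the derived functor of the $F$-connecting morphism; everything else is then formal.
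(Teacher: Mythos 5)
Your construction of the input data is the same as the paper's: injective resolutions of $A,B,C$ in a (degreewise split) short exact sequence, then $F$, then the short exact sequence of Cartan--Eilenberg resolutions (Theorem \ref{ce}), then $G$ and totalization. (The paper gets the first step from the Horseshoe lemma rather than from the degenerate case of Theorem \ref{ce}, but these coincide.) Where you genuinely diverge is in how $\delta_r$ is extracted: the paper works with morphisms of exact couples, defining $\delta$ on $A_1$ and $E_1$ as the connecting homomorphisms of $0\to F^pR^*\to F^pS^*\to F^pT^*\to 0$ and of the quotient sequences, and then checking compatibility with $i,j,k$ (up to sign) so that the maps descend to all derived couples. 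You instead use the bidegreewise splitting (valid, since each $0\to J_A^{p,q}\to J_B^{p,q}\to J_C^{p,q}\to 0$ splits by injectivity of $J_A^{p,q}$) to realise the connecting map as an honest filtration-preserving chain map $\partial\colon K_C\to K_A[1]$ and then invoke functoriality of the spectral sequence of a filtered complex. This is equivalent and arguably cleaner: commutation with $d_r$, the passage from page to page, and the behaviour on $F^pH^n$ all come for free, whereas the paper must verify the exact-couple identities by hand. The price is that your induced morphism of spectral sequences must still be matched against the connecting homomorphisms of the quotient sequences (which is the standard fact that for a degreewise split sequence the connecting map is induced by the off-diagonal differential); you implicitly use this when identifying $\delta_\infty$ and $\delta_2$ with boundary maps.

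Two points need repair. First, in your argument for the second bullet the roles of $\partial^{\mathrm{I}}$ and $\partial^{\mathrm{II}}$ are interchanged: with your conventions $\partial^{\mathrm{I}}$ raises the first index $p$, which \emph{is} the filtration degree, so it is $\partial^{\mathrm{I}}$ that dies on the associated graded and $\partial^{\mathrm{II}}$ (the $q$-raising component) that survives to $E_0$ and, after taking cohomology in the $q$-direction, gives the map of injective resolutions $H^q(K^{p,*})\to H^{q+1}(I^{p,*})$ lifting $R^qF(C)\to R^{q+1}F(A)$; the argument goes through verbatim once the labels are swapped. Second, the third bullet is not quite ``exactly as recalled in Section \ref{gss}'': that section identifies the cohomology \emph{objects} $H^n(K_X)\cong R^n(G\circ F)(X)$, but you must also identify the connecting homomorphism of $0\to K_A\to K_B\to K_C\to 0$ with the coboundary of the long exact sequence for $G\circ F$. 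This requires the commutative diagram of short exact sequences of complexes comparing $0\to (G\circ F)(M^*)\to (G\circ F)(N^*)\to (G\circ F)(P^*)\to 0$ with $0\to K_A\to K_B\to K_C\to 0$ via the augmentation quasi-isomorphisms, which the paper establishes explicitly (using the second filtration to prove the augmentations are quasi-isomorphisms). Neither issue affects the viability of your approach, but both steps must be supplied.
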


In Section \ref{app} we specialize to the case of the Leray spectral sequence. Let $X,Y$ be paracompact spaces and $f : X \to Y$ continuous. Consider on $X$ the exponential sequence $0 \to \mathbb{Z} \to \underline{\mathbb{C}} \to \underline{\mathbb{C}}^* \to 0$, where $\underline{A}$ indicates the sheaf of continuous functions valued in an abelian group $A$. A well known fact is that the coboundary map $\delta : H^n(X,\underline{\mathbb{C}}^*) \to H^{n+1}(X,\mathbb{Z})$ is an isomorphism for $n \ge 1$. Note however that the Leray spectral sequences associated to the map $f : X \to Y$ determine filtrations on $H^n(X,\underline{\mathbb{C}}^*)$ and $H^{n+1}(X,\mathbb{Z})$. It is natural then to ask how these two filtrations compare under the coboundary $\delta$. We give the answer in Theorem \ref{cz}, as an application of Theorem \ref{main}.\\

Section \ref{cer} contains the main technical result needed for Theorem \ref{main}. The result here is that given a short exact sequence of chain complexes $0 \to A^* \to B^* \to C^* \to 0$ in an abelian category with enough injectives one can construct Cartan-Eilenberg resolutions for $A^*,B^*,C^*$ in such a way that they fit into a short exact sequence of double complexes. Section \ref{gss} recalls the necessary details of the Grothendieck spectral sequence, Section \ref{bses} is the proof of the main result and Section \ref{app} the application to sheaf cohomology previously described.


\section{Cartan-Eilenberg resolutions}\label{cer}

Let $\mathcal{A}$ be an abelian category. We recall the notion of a Cartan-Eilenberg resolution.
\begin{definition} Let $A^*$ be a complex in $\mathcal{A}$. A {\em Cartan-Eilenberg resolution} \cite{gm},\cite{ce} of $A^*$ is a sequence of complexes $I^{0,*},I^{1,*},I^{2,*},\dots$ together with chain maps $I^{p,*} \to I^{p+1,*}$ and injective chain map $A^* \to I^{0,*}$ such that
\begin{itemize}
\item{each object $I^{p,q}$ is injective,}
\item{the sequence $0 \to A^* \to I^{0,*} \to I^{1,*} \to \cdots$ is exact,}
\item{let $Z^q(A)$ denote the degree $q$ cocycles of $A^*$ and $Z^{p,q}(I)$ the degree $q$ cocycles of $I^{p,*}$. The induced sequence on cocycles $Z^q(A) \to Z^{0,q}(I) \to Z^{1,q}(I) \to \cdots$ is required to be an injective resolution of $Z^q(A)$. In particular each $Z^{p,q}(I)$ is injective,}
\item{similarly for coboundaries and cohomology the induced sequences are injective resolutions.}
\end{itemize}
\end{definition}

Our main result is that for a short exact sequence of chain complexes in an abelian category with enough injectives one can construct corresponding short exact sequence of Cartan-Eilenberg resolutions.

\begin{theorem}\label{short} Let $0 \to A^* \to B^* \to C^* \to 0$ be a short exact sequence of chain complexes in an abelian category with enough injectives. Then there exists a short exact sequence $0 \to I^* \to J^* \to K^* \to 0$ of chain complexes of injectives, together with injections $A^q \to I^q$, $B^q \to J^q$, $C^q \to K^q$ for all $q$ forming a commutative diagram
\begin{equation*}\xymatrix{
0 \ar[r] & A^* \ar[r] \ar[d] & B^* \ar[r] \ar[d] & C^* \ar[r] \ar[d] & 0 \\
0 \ar[r] & I^* \ar[r] & J^* \ar[r] & K^* \ar[r] & 0
}
\end{equation*}
Let $Z^*(A),B^*(A),H^*(A)$ denote the cocycles, coboundaries and cohomology for the complex $A^*$ and use similar notation for the other complexes. We may in addition make the choices so that the $Z^*(I),B^*(I),H^*(I)$ are all injectives, the natural maps $Z^*(A) \to Z^*(I)$, $B^*(A) \to B^*(I)$, $H^*(A) \to H^*(I)$ are injective and such that the corresponding statements for $J^*,K^*$ hold as well. Moreover if $A^q = B^q = C^q = 0$ whenever $q < 0$ then we may choose the $I^*,J^*,K^*$ so that likewise $I^q = J^q = K^q = 0$ for $q < 0$.
\end{theorem}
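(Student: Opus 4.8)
The plan is to construct the three Cartan--Eilenberg resolutions simultaneously. By the standard inductive construction of a Cartan--Eilenberg resolution in the homological degree $p$ — at the $p$-th stage one resolves the complexes of cokernels of the previously constructed columns — it suffices to treat the base case $p=0$: given $0\to A^*\to B^*\to C^*\to 0$, produce injective chain maps $A^*\hookrightarrow I^{0,*}$, $B^*\hookrightarrow J^{0,*}$, $C^*\hookrightarrow K^{0,*}$ into complexes whose cocycles, coboundaries and cohomology are injective and receive the cocycles, coboundaries and cohomology of the respective complexes injectively, fitting into a short exact sequence $0\to I^{0,*}\to J^{0,*}\to K^{0,*}\to 0$ compatible with the augmentations, and such that the complexes of cokernels $\mathrm{coker}(A^*\to I^{0,*})$ etc.\ again form a short exact sequence so that the induction can proceed. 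The latter holds by the $3\times 3$ lemma, the vertical maps being injective and the rows short exact. For the vanishing statement, every object produced in the base case will be built out of the cocycles, coboundaries and cohomology of the three complexes, so it vanishes for $q<0$ whenever those do, and the induction preserves this.

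For the base case the sequences $0\to A^q\to B^q\to C^q\to 0$ are available degree by degree, but the induced sequences on cocycles $Z^q$, coboundaries $B^q$ and cohomology $H^q$ are only long exact — the cohomology sequences glue into the long exact cohomology sequence of $0\to A^*\to B^*\to C^*\to 0$ — so the first step is to splice them into short exact sequences. With $P^q=\ker(H^q(A)\to H^q(B))$, $Q^q=\mathrm{im}(H^q(A)\to H^q(B))$ and $R^q=\mathrm{im}(H^q(B)\to H^q(C))$ the cohomology sequence is woven from
\begin{equation*}
0\to P^q\to H^q(A)\to Q^q\to 0,\qquad 0\to Q^q\to H^q(B)\to R^q\to 0,\qquad 0\to R^q\to H^q(C)\to P^{q+1}\to 0,
\end{equation*}
while a snake-lemma computation applied to $0\to Z^q(A)\to A^q\to B^{q+1}(A)\to 0$ and its analogues shows that $0\to Z^q(A)\to Z^q(B)\to Z^q(C)$ has cokernel $P^{q+1}$ and that $0\to B^q(A)\to B^q(B)\to B^q(C)\to 0$ has homology $P^q$ at its middle term; I splice these through $Z'^q=\mathrm{im}(Z^q(B)\to Z^q(C))$ and $S^q=\ker(B^q(B)\to B^q(C))$ likewise. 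Now fix, once and for all, an injective object containing each of the atomic pieces ($P^q,Q^q,R^q,Z^q(A),Z'^q,B^q(A),B^q(C),\dots$), reusing a single choice wherever a given piece recurs, and apply the horseshoe lemma repeatedly to obtain injective objects $\widetilde H^q(X)\supseteq H^q(X)$, $\widetilde B^q(X)\supseteq B^q(X)$, $\widetilde Z^q(X)\supseteq Z^q(X)$ for $X\in\{A,B,C\}$ with $\widetilde B^q(X)\hookrightarrow\widetilde Z^q(X)$ of injective cokernel $\widetilde H^q(X)$ and with the three maps of each kind reproducing the spliced sequences above. A final application of the horseshoe lemma to $0\to Z^q(A)\to A^q\to B^{q+1}(A)\to 0$ gives $I^{0,q}=\widetilde Z^q(A)\oplus\widetilde B^{q+1}(A)$ containing $A^q$, and similarly $J^{0,q},K^{0,q}$; the vertical differential is the composite $I^{0,q}\twoheadrightarrow\widetilde B^{q+1}(A)\hookrightarrow\widetilde Z^{q+1}(A)\hookrightarrow I^{0,q+1}$, and likewise for $J$ and $K$.

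The step I expect to be the main obstacle is the verification that $0\to I^{0,q}\to J^{0,q}\to K^{0,q}\to 0$ is short exact. Here $\widetilde Z^q(B)$ exceeds the image of $\widetilde Z^q(A)$ by $\widetilde Z'^q$ and $\widetilde Z^q(C)$ exceeds the image of $\widetilde Z^q(B)$ by exactly $\widetilde P^{q+1}$, while dually $\widetilde B^{q+1}(B)$ carries an extra summand $\widetilde P^{q+1}$ between $\widetilde B^{q+1}(A)$ and $\widetilde B^{q+1}(C)$. Since $I^{0,q},J^{0,q},K^{0,q}$ are the corresponding extensions of the $\widetilde B^{q+1}$ by the $\widetilde Z^q$, the two occurrences of the common object $\widetilde P^{q+1}$ cancel — exactly as the analogous discrepancies for $Z^q$ and $B^{q+1}$ cancel to produce $0\to A^q\to B^q\to C^q\to 0$ — and the term sequence comes out short exact; this is precisely where the coherent choice of the atomic pieces, the single $\widetilde P^{q+1}$ in particular, is used. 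The remaining verifications are routine: that the vertical differentials square to zero (which is automatic, being a subobject followed by projection onto the complementary summand) and commute with $I^{0,*}\to J^{0,*}\to K^{0,*}$ once the horseshoe data is chosen compatibly, that $Z^{0,q}(I)=\widetilde Z^q(A)$, $B^{0,q}(I)=\widetilde B^q(A)$, $H^{0,q}(I)=\widetilde H^q(A)$ are injective and receive the originals injectively (and likewise for $J,K$), and that the cokernel complexes again form a short exact sequence, which feeds the induction.
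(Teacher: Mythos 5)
Your construction is essentially the paper's own proof of this theorem, in different notation: your atomic pieces $P^q,Q^q,R^q$ together with $B^q(A),B^q(C)$ are precisely the paper's five families $W^q(A),W^q(B),W^q(C),B^q(A),B^q(C)$, every other object ($\widetilde H,\widetilde B,\widetilde Z$ and the terms $I^q,J^q,K^q$ themselves) is assembled as a direct sum of one fixed injective per atomic piece, and the cancellation of the two copies of $\widetilde P^{q+1}$ (one inside $\widetilde Z^q(C)$, one inside $\widetilde B^{q+1}(B)$) is exactly how the paper gets exactness of $0\to I^q\to J^q\to K^q\to 0$. Two remarks on emphasis. First, once the atomic decomposition is fixed, the step you single out as the main obstacle is pure bookkeeping of summands; the term sequence is split exact by construction. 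Second, the step you declare routine --- choosing the horseshoe data ``compatibly'' --- is where essentially all of the paper's labour goes, and it is not automatic: several objects are simultaneously middle terms of two or more of the short exact sequences being split (e.g.\ $Z^q(A)$ sits in $0\to B^q(A)\to Z^q(A)\to H^q(A)\to 0$ and in $0\to S^q\to Z^q(A)\to Q^q\to 0$ while also being the kernel of $Z^q(B)\to Z'^q$; the analogous subobject of $Z^q(B)$ and the object $B^q$ itself have the same feature), and a \emph{single} embedding of each such object into the corresponding direct sum of injectives must make all the relevant squares commute at once. The paper handles this componentwise: each component is either forced by commutativity or supplied by injectivity of an atomic summand, and in one case (the map out of the middle complex's analogue of your $S^q$) one must first verify that two forced partial maps agree on their common source and factor through a cokernel before injectivity can be invoked. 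None of this derails your plan, but it is the substantive verification rather than a routine one, and your write-up would need to carry it out.
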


Before we prove Theorem \ref{short} let us state the main result and show how it follows:
\begin{theorem}\label{ce}Let $0 \to A^* \to B^* \to C^* \to 0$ be a short exact sequence of chain complexes in an abelian category with enough injectives. Then there exists Cartan-Eilenberg resolutions $0 \to A^* \to I^{0,*} \to I^{1,*} \to \cdots$, $0 \to B^* \to J^{0,*} \to J^{1,*} \to \cdots$, $0 \to C^* \to K^{0,*} \to K^{1,*} \to \cdots$ for $A^*,B^*,C^*$ and maps $I^{q,*} \to J^{q,*} \to K^{q,*}$ forming a commutative diagram of chain complexes
\begin{equation*}\xymatrix{
& 0 \ar[d] & 0 \ar[d] & 0 \ar[d] & \\
0 \ar[r] & A^* \ar[r] \ar[d] & B^* \ar[r] \ar[d] & C^* \ar[r] \ar[d] & 0 \\
0 \ar[r] & I^{0,*} \ar[r] \ar[d] & J^{0,*} \ar[r] \ar[d] & K^{0,*} \ar[r] \ar[d] & 0 \\
0 \ar[r] & I^{1,*} \ar[r] \ar[d] & J^{1,*} \ar[r] \ar[d] & K^{1,*} \ar[r] \ar[d] & 0 \\
& \vdots & \vdots & \vdots &
}
\end{equation*}
where in addition the rows are exact. Moreover if $A^q = B^q = C^q = 0$ for $q < 0$ we may choose the chain complex so that in addition $I^{p,q} = J^{p,q} = K^{p,q} = 0$ for $q < 0$ as well.
\end{theorem}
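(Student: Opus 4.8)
The plan is to deduce Theorem~\ref{ce} from Theorem~\ref{short} by iterating the latter, building the Cartan--Eilenberg resolutions one horizontal level at a time.

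First I would apply Theorem~\ref{short} to $0 \to A^* \to B^* \to C^* \to 0$ to obtain a short exact sequence $0 \to I^{0,*} \to J^{0,*} \to K^{0,*} \to 0$ of complexes of injectives, together with compatible injections from $A^*, B^*, C^*$, chosen so that the cocycles, coboundaries and cohomology of $I^{0,*}, J^{0,*}, K^{0,*}$ are injective, the natural maps into them from those of $A^*, B^*, C^*$ are injective, and (in the bounded case) $I^{0,q} = J^{0,q} = K^{0,q} = 0$ for $q < 0$. Set $A_1^* = \mathrm{coker}(A^* \to I^{0,*})$, and similarly $B_1^*, C_1^*$; the snake lemma applied degreewise to this commutative diagram shows $0 \to A_1^* \to B_1^* \to C_1^* \to 0$ is again a short exact sequence of complexes, bounded below at $0$ if the original was. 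Repeating, one obtains for each $p \ge 0$ a short exact sequence $0 \to A_p^* \to B_p^* \to C_p^* \to 0$ (with $A_0^* = A^*$, and so on) and, by Theorem~\ref{short} applied to it, a short exact sequence $0 \to I^{p,*} \to J^{p,*} \to K^{p,*} \to 0$ with injections from $A_p^*, B_p^*, C_p^*$ and the same injectivity properties. Taking $I^{p,*} \to I^{p+1,*}$ to be the composite $I^{p,*} \twoheadrightarrow A_{p+1}^* \hookrightarrow I^{p+1,*}$, and likewise for $J$ and $K$, produces the diagram of the theorem; its rows are exact by Theorem~\ref{short}, and since $A_{p+1}^*$ is the cokernel of $A_p^* \to I^{p,*}$ while $A_{p+1}^* \hookrightarrow I^{p+1,*}$ is injective, the columns $0 \to A^* \to I^{0,*} \to I^{1,*} \to \cdots$ are exact as well. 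Commutativity is built in, each square being obtained by passing to cokernels and invoking the commutative diagram returned by Theorem~\ref{short}, and the vanishing for $q < 0$ propagates since $A_{p+1}^q = I^{p,q}/A_p^q$.

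It then remains to check that the vertical complexes are Cartan--Eilenberg resolutions, i.e. that for each fixed $q$ the induced sequences on cocycles, coboundaries and cohomology are injective resolutions. Their terms are injective by Theorem~\ref{short}, so the issue is exactness, and by construction it suffices to show that applying $Z^q(-)$, $B^q(-)$ and $H^q(-)$ to each short exact sequence of complexes $0 \to A_p^* \to I^{p,*} \to A_{p+1}^* \to 0$ again gives a short exact sequence: splicing these over $p$ produces the desired resolutions, the only extra point being exactness at the left-hand end, which is the injectivity of $Z^q(A) \to Z^q(I^{0,*})$, etc., guaranteed by Theorem~\ref{short}. For cohomology this is immediate, since in the long exact sequence of $0 \to A_p^* \to I^{p,*} \to A_{p+1}^* \to 0$ every connecting map vanishes because $H^{q+1}(A_p) \to H^{q+1}(I^{p,*})$ is injective, so it breaks up into $0 \to H^q(A_p) \to H^q(I^{p,*}) \to H^q(A_{p+1}) \to 0$. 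For cocycles, the snake lemma applied to the rows in degrees $q$ and $q+1$ shows that $Z^q(I^{p,*}) \to Z^q(A_{p+1})$ is surjective precisely when $A_p^{q+1}/B^{q+1}(A_p) \to I^{p,q+1}/B^{q+1}(I^{p,*})$ is injective, and the latter follows by comparing the canonical extensions $0 \to H^{q+1}(-) \to (-)^{q+1}/B^{q+1}(-) \to B^{q+2}(-) \to 0$ for $A_p^*$ and $I^{p,*}$ and using that both $H^{q+1}(A_p) \to H^{q+1}(I^{p,*})$ and $B^{q+2}(A_p) \to B^{q+2}(I^{p,*})$ are injective; the remaining exactness of $0 \to Z^q(A_p) \to Z^q(I^{p,*}) \to Z^q(A_{p+1}) \to 0$ is then automatic. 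For coboundaries, $B^q(A_p) \to B^q(I^{p,*})$ is injective as the restriction of an injection, $B^q(I^{p,*}) \to B^q(A_{p+1})$ is surjective by lifting a degree $q-1$ preimage and applying $d$, and exactness in the middle follows from a short chase using the surjectivity of $Z^{q-1}(I^{p,*}) \to Z^{q-1}(A_{p+1})$ just established.

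With Theorem~\ref{short} available the argument is essentially formal; the only genuine input is the bundle of supplementary injectivity statements in Theorem~\ref{short}, which is exactly what forces $Z^q$, $B^q$ and $H^q$ to be short-exact along the resolutions. I therefore expect the only mild obstacle here to be the bookkeeping needed to make the choices of $I^{p,*}$, $J^{p,*}$, $K^{p,*}$ compatible so that the whole infinite diagram commutes, which is handled uniformly by always feeding the stage-$p$ cokernel short exact sequence into Theorem~\ref{short} at stage $p+1$ and keeping the commutative diagram it returns; all the real weight of the proof sits in Theorem~\ref{short} itself.
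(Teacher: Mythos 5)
Your proposal is correct and follows essentially the same route as the paper: iterate Theorem~\ref{short} on the successive cokernel short exact sequences (exact by the snake/nine lemma), splice to build the double complex, and then use the supplementary injectivity statements of Theorem~\ref{short} on cocycles, coboundaries and cohomology to verify exactness of the induced resolutions of $Z^q$, $B^q$ and $H^q$. The paper's verification of this last point is phrased via kernel computations in the cokernel complexes rather than your snake-lemma reduction, but the underlying mechanism is identical.
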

\begin{proof}
We use Theorem \ref{short} to construct the first row $0 \to I^{0,*} \to J^{0,*} \to K^{0,*} \to 0$ forming a commutative diagram with exact rows and columns
\begin{equation*}\xymatrix{
& 0 \ar[d] & 0 \ar[d] & 0 \ar[d] & \\
0 \ar[r] & A^* \ar[r] \ar[d]^{\alpha^*} & B^* \ar[r] \ar[d]^{\beta^*} & C^* \ar[r] \ar[d]^{\gamma^*} & 0 \\
0 \ar[r] & I^{0,*} \ar[r] & J^{0,*} \ar[r] & K^{0,*} \ar[r] & 0
}
\end{equation*}
Next take the cokernels of the vertical maps to obtain
\begin{equation*}\xymatrix{
& 0 \ar[d] & 0 \ar[d] & 0 \ar[d] & \\
0 \ar[r] & A^* \ar[r] \ar[d]^{\alpha^*} & B^* \ar[r] \ar[d]^{\beta^*} & C^* \ar[r] \ar[d]^{\gamma^*} & 0 \\
0 \ar[r] & I^{0,*} \ar[r] \ar[d] & J^{0,*} \ar[r] \ar[d] & K^{0,*} \ar[r] \ar[d] & 0 \\
0 \ar[r] & coker(\alpha^*) \ar[r] \ar[d] & coker(\beta^*) \ar[r] \ar[d] & coker(\gamma^*) \ar[r] \ar[d] & 0 \\
& 0 & 0 & 0 &
}
\end{equation*}
Note that the bottom row is exact by the Nine lemma \cite{fre}. We may now apply Theorem \ref{short} to $0 \to coker(\alpha^*) \to coker(\beta^*) \to coker(\gamma^*) \to 0$ to obtain the next row $0 \to I^{1,*} \to J^{1,*} \to K^{1,*} \to 0$. Continuing in this fashion we construct the desired double complex of chain complexes with exact columns and short exact rows. It remains to show that the resolutions so obtained of $A^*,B^*,C^*$ are indeed Cartan-Eilenberg resolutions. We will show that $0 \to A^* \to I^{0,*} \to I^{1,*} \to \cdots$ is a Cartan-Eilenberg resolution. The proofs for $B^*$ and $C^*$ are identical. By construction we have that the $Z^{p,*}(I),B^{p,*}(I),H^{p,*}(I)$ are all injective objects and that the maps $Z^*(A) \to Z^{0,*}(I)$, $B^*(A) \to B^{0,*}(I)$, $H^*(A) \to H^{0,*}(I)$ are all injective. It remains to that the sequences $\cdots \to Z^{p,*}(I) \to Z^{p+1,*}(I) \to \cdots $, $\cdots \to B^{p,*}(I) \to B^{p+1,*}(I) \to \cdots $, $\cdots \to H^{p,*}(I) \to H^{p+1,*}(I) \to \cdots $ are exact.\\

Consider the exact sequence $\cdots \to I^{p-1,*} \to I^{p,*} \to I^{p+1,*} \to \cdots$ of chain complexes. Letting $C^{p,*}$ denote the cokernel of $I^{p-1,*} \to I^{p,*}$ we get a commutative diagram
\begin{equation*}\xymatrix{
& & & 0 \ar[dr] & & 0 & \\
\ar[dr] & & & & C^{p,*} \ar[dr] \ar[ur] & & \\
\dots \ar[r] & I^{p-1,*} \ar[dr] \ar[rr] & & I^{p,*} \ar[ur] \ar[rr] & & I^{p+1,*} \ar[r] \ar[dr] & \dots \\
& & C^{p-1,*} \ar[dr] \ar[ur] & & & & \\
& 0 \ar[ur] & & 0 & & &
}
\end{equation*}
where the diagonal sequences are exact. Moreover by the construction of the complexes $I^{p,*}$ we know that the maps $C^{p,*} \to I^{p+1,*}$ are injective on cocycles, coboundaries and cohomology as in Theorem \ref{short}. Let $Z^{p,*}(C),B^{p,*}(C),H^{p,*}(C)$ denote the cocycles, coboundaries and cohomology for the complex $C^{p,*}$. Let us check exactness at $I^{p,*}$ for cocycles. The kernel of $Z^{p,*}(I) \to Z^{p+1,*}(I)$ is equal to the kernel of $Z^{p,*}(I) \to Z^{p,*}(C)$ which in turn is equal to $Z^{p-1,*}(C)$. Now using the fact that $H^{p-2,*}(C) \to H^{p-1,*}(I)$ is injective we immediately see that $Z^{p-1,*}(I) \to Z^{p-1,*}(C)$ is surjective. It follows that the kernel of $Z^{p,*}(I) \to Z^{p+1,*}(I)$ is exactly the image $Z^{p-1,*}(I) \to Z^{p,*}(I)$ as required. A similar proof shows exactness at $I^{p,*}$ for cohomology and boundaries.\\

The final statement about vanishing of negative degree terms follows from the similar result for Theorem \ref{short}.
\end{proof}

\begin{proof}[Proof of Theorem \ref{short}] Given a short exact sequence $0 \to A^* \to B^* \to C^* \to 0$ of chain complexes introduce the following objects: cocycles $Z^*(A),Z^*(B),Z^*(C)$, coboundaries $B^*(A),B^*(B),B^*(C)$, cohomologies $H^*(A),H^*(B),H^*(C)$ and two more sets of objects $W^*(A),W^*(B),W^*(C), X^*(A),X^*(B),X^*(C)$. We define $W^q(A)$ to be the kernel of the induced map $H^q(A) \to H^q(B)$ and $W^q(B),W^q(C)$ are similarly defined as kernels in the long exact sequence in cohomology. We define $X^q(A)$ to be the kernel of the composition $Z^q(A) \to H^q(A) \to H^q(B)$ and similarly define $X^q(B),X^q(C)$. The objects so defined fit in to a variety of short exact sequences as follows.

First the long exact sequence in cohomology we have:
\begin{eqnarray}
&0 \to W^q(A) \to H^q(A) \to W^q(B) \to 0 \label{es1}, \\
&0 \to W^q(B) \to H^q(B) \to W^q(C) \to 0 \label{es2}, \\
&0 \to W^q(C) \to H^q(C) \to W^{q+1}(A) \to 0 \label{es3}.
\end{eqnarray}
From the definition of the objects $X^q(A),X^q(B),X^q(C)$ we have:
\begin{eqnarray}
&0 \to X^q(A) \to Z^q(A) \to W^q(B) \to 0 \label{es4}, \\
&0 \to X^q(B) \to Z^q(B) \to W^q(C) \to 0 \label{es5}, \\
&0 \to X^q(C) \to Z^q(C) \to W^{q+1}(A) \to 0 \label{es6}.
\end{eqnarray}
From the definition of cohomology:
\begin{eqnarray}
&0 \to B^q(A) \to Z^q(A) \to H^q(A) \to 0 \label{es7}, \\
&0 \to B^q(B) \to Z^q(B) \to H^q(B) \to 0 \label{es8}, \\
&0 \to B^q(C) \to Z^q(C) \to H^q(C) \to 0 \label{es9}.
\end{eqnarray}
Also from the definition of the $X^q(A),X^q(B),X^q(C)$:
\begin{eqnarray}
&0 \to B^q(A) \to X^q(A) \to W^q(A) \to 0 \label{es10}, \\
&0 \to B^q(B) \to X^q(B) \to W^q(B) \to 0 \label{es11}, \\
&0 \to B^q(C) \to X^q(C) \to W^q(C) \to 0 \label{es12}.
\end{eqnarray}
From the definition of cycles and boundaries:
\begin{eqnarray}
&0 \to Z^q(A) \to A^q \to B^{q+1}(A) \to 0 \label{es13}, \\
&0 \to Z^q(B) \to B^q \to B^{q+1}(B) \to 0, \\
&0 \to Z^q(C) \to C^q \to B^{q+1}(C) \to 0 \label{es15}.
\end{eqnarray}
Finally a few more exact sequences that can be easily shown:
\begin{eqnarray}
&0 \to X^q(A) \to B^q(B) \to B^q(C) \to 0 \label{es16}, \\
&0 \to Z^q(A) \to Z^q(B) \to X^q(C) \to 0 \label{es17}, \\
&0 \to Z^q(A) \to X^q(B) \to B^q(C) \to 0 \label{es18}, \\
&0 \to A^q \to B^q \to C^q \to 0. \label{es19}
\end{eqnarray}
We choose five families of injective objects indexed by the integer $q$ which we suggestively denote as follows $W^q(I),W^q(J),W^q(K),B^q(I),B^q(K)$. By assumption $\mathcal{A}$ has enough injectives so we can choose these objects together with injections $W^q(A) \to W^q(I)$, $W^q(B) \to W^q(J)$, $W^q(C) \to W^q(K)$, $B^q(A) \to B^q(I)$, $B^q(C) \to B^q(K)$. We aim ultimately to construct chain complexes $I^*,J^*,K^*$ such that the objects $W^q(I),W^q(J),W^q(K),B^q(I),B^q(K)$ agree with the objects that their notation suggests.

We further define the following objects
\begin{eqnarray}
H^q(I) &=& W^q(I) \oplus W^q(J), \label{hi} \\
H^q(J) &=& W^q(J) \oplus W^q(K), \\
H^q(K) &=& W^q(K) \oplus W^{q+1}(I), \\
B^q(J) &=& W^q(I) \oplus B^q(I) \oplus B^q(K), \\
X^q(I) &=& W^q(I) \oplus B^q(I),  \\
X^q(J) &=& W^q(I) \oplus W^q(J) \oplus B^q(I) \oplus B^q(K), \label{xk} \\
X^q(K) &=& W^q(K) \oplus B^q(K), \\
Z^q(I) &=& W^q(I) \oplus W^q(J) \oplus B^q(I), \label{zi} \\
Z^q(J) &=& W^q(I) \oplus W^q(J) \oplus W^q(K) \oplus B^q(I) \oplus B^q(K), \\
Z^q(K) &=& W^q(K) \oplus W^{q+1}(I) \oplus B^q(K), \label{zk} \\
I^q &=& Z^q(I) \oplus B^{q+1}(I), \label{defi} \\
J^q &=& Z^q(J) \oplus B^{q+1}(J), \label{defj} \\
K^q &=& Z^q(K) \oplus B^{q+1}(K). \label{defk}
\end{eqnarray}
The idea behind these definitions is that assuming the existence of the desired complexes $I^*,J^*,K^*$ we have exact sequences like Equations (\ref{es1})-(\ref{es19}). If in addition all the cocycles, coboundaries and so forth are injective then all these sequences would be split and the above definitions would hold. In fact we will now give $I^*,J^*,K^*$ as defined in (\ref{defi})-(\ref{defk}) the structure of cochain complexes so that these assumptions actually hold true. First of all note that using inclusions and projections there are unique ways to define maps between the various objects so that the short exact sequences corresponding to (\ref{es1})-(\ref{es19}) hold (with $I,J,K$ in place of $A,B,C$). For example the exact sequence $0 \to W^q(I) \to H^q(I) \to W^q(J) \to 0$ is just the spit exact sequence $0 \to W^q(I) \to W^q(I) \oplus W^q(J) \to W^q(J) \to 0$. Next we define a differential $d : I^q \to I^{q+1}$ as the composition $I^q \to B^{q+1}(I) \to Z^{q+1}(I) \to I^{q+1}$. Note that the three maps in this composition correspond to maps in the sequences (\ref{es13}),(\ref{es7}) and (\ref{es13}) again. Thus by (\ref{es13}) we see that $d^2 = 0$ so that this indeed defines a differential. We can similarly define differentials for $J^*$ and $K^*$.

We claim that the maps $I^q \to J^q \to K^q$ as in (\ref{es19}) are in fact morphisms of complexes. To see this one easily checks that the following diagram commutes:
\begin{equation*}\xymatrix{
I^q \ar[r] \ar[dd]^{(\ref{es19})} & B^{q+1}(I) \ar[r] \ar[d]^{(\ref{es10})} & Z^{q+1}(I) \ar[r] \ar[dd]^{(\ref{es17})} & I^{q+1} \ar[dd]^{(\ref{es19})} \\
& X^{q+1}(I) \ar[d]^{(\ref{es16})} & & \\
J^q \ar[r] \ar[dd]^{(\ref{es19})} & B^{q+1}(J) \ar[r] \ar[dd]^{(\ref{es16})} & Z^{q+1}(J) \ar[r] \ar[d]^{(\ref{es17})} & J^{q+1} \ar[dd]^{(\ref{es19})} \\
& & X^{q+1}(K) \ar[d]^{(\ref{es6})} & \\
K^q \ar[r] & B^{q+1}(K) \ar[r] & Z^{q+1}(K) \ar[r] & K^{q+1}
}
\end{equation*}
The labels on the vertical arrows indicate that these maps are the same as the maps defined as in the exact sequence indicated by the label. The horizontal rows are precisely the maps defining the differentials for $I^*,J^*,K^*$ so commutativity of this diagram implies the maps $I^* \to J^* \to K^*$ are chain maps.\\

Now that we have a short exact sequence $0 \to I^* \to J^* \to K^* \to 0$ of chain complexes we get associated spaces of cocycles, coboundaries, cohomology and so on. One can check easily that the $Z^*(I),Z^*(J),Z^*(K)$ are indeed the cocycles, $B^*(I),B^*(J),B^*(K)$ are indeed the coboundaries and so forth. Thus the objects $W^*(I),W^*(J),W^*(K),B^*(I),B^*(J)$ and the various objects defined in Equations (\ref{hi})-(\ref{zk}) coincide with their namesake.\\

The next part of the proof is to construct chain maps $A^* \to I^*$, $B^* \to J^*$ and $C^* \to K^*$. Such maps would induce maps between cocycles, coboundaries and so on. Thus our strategy will be to construct these maps by working backwards, starting from the existing maps $W^q(A) \to W^q(I)$, $W^q(B) \to W^q(J), \dots $ and working our way through the exact sequences (\ref{es1})-(\ref{es19}). In other words for each of the short exact sequences (\ref{es1})-(\ref{es19}), we want to construct a corresponding commutative diagram of short exact sequences. For example we start off with (\ref{es1}). The desired commutative diagram is as follows:
\begin{equation*}\xymatrix{
0 \ar[r] & W^q(I) \ar[r] & H^q(I) \ar[r] & W^q(J) \ar[r] & 0 \\
0 \ar[r] & W^q(A) \ar[r] \ar[u] & H^q(A) \ar[r] \ar[u] & W^q(B) \ar[r] \ar[u] & 0
}
\end{equation*}
where the maps $W^q(A) \to W^q(I)$, $W^q(B) \to W^q(J)$ are the previously chosen injections. Since $H^q(I) = W^q(I) \oplus W^q(J)$ we need to chose maps $f : H^q(A) \to W^q(I)$ and $g : H^q(A) \to W^q(J)$. The diagram will commute if and only if the following diagrams commute:
\begin{equation*}\xymatrix{
& W^q(I) \\
W^q(A) \ar[ur] \ar[r] & H^q(A) \ar[u]^f
}
\end{equation*}
and
\begin{equation*}\xymatrix{
& W^q(J) \\
H^q(A) \ar[ur]^g \ar[r] & W^q(B) \ar[u]
}
\end{equation*}
In the first case such a map $f$ exists because $W^q(I)$ is an injective object and $W^q(A) \to H^q(A)$ is an injection. In the second case $g$ exists just by defining it to be the composition. Note also that the map $H^q(A) \to H^q(I)$ so defined is injective.\\

Using the exact same reasoning we construct similar injections $H^q(B) \to H^q(J)$, $H^q(C) \to H^q(K)$ yielding commutative diagrams corresponding to the sequences (\ref{es2}) and (\ref{es3}). Similarly we construct injections $X^k(A) \to X^k(I)$ and $X^k(C) \to X^k(K)$ yielding commutative diagrams corresponding to (\ref{es10}) and (\ref{es12}). From this we may construct an injection $B^q(B) \to B^q(J)$ yielding commutative diagram corresponding to (\ref{es16}). However when we consider the construction of a map $Z^q(A) \to Z^q(I)$ we run into a complication, namely there are two exact sequences (\ref{es4}),(\ref{es7}) with $Z^q(A)$ as the middle term. We would like to choose the map $Z^q(A) \to Z^q(I)$ to yield commutative diagrams corresponding to both of these exact sequences. For clarity we write out the two desired commutative diagrams
\begin{equation*}\xymatrix{
0 \ar[r] & X^q(I) \ar[r] & Z^q(I) \ar[r] & W^q(J) \ar[r] & 0 \\
0 \ar[r] & X^q(A) \ar[r] \ar[u] & Z^q(A) \ar[r] \ar[u] & W^q(B) \ar[r] \ar[u] & 0
}
\end{equation*}
and
\begin{equation*}\xymatrix{
0 \ar[r] & B^q(I) \ar[r] & Z^q(I) \ar[r] & H^q(I) \ar[r] & 0 \\
0 \ar[r] & B^q(A) \ar[r] \ar[u] & Z^q(A) \ar[r] \ar[u] & H^q(A) \ar[r] \ar[u] & 0
}
\end{equation*}
All the maps in these diagrams are already determined except the desired map $Z^q(A) \to Z^q(I)$. By sequence (\ref{zi}) we have $Z^q(I) = W^q(I) \oplus W^q(J) \oplus B^q(I)$, so we need to choose maps $Z^q(A) \to W^q(I)$, $Z^q(A) \to W^q(J)$, $Z^q(A) \to B^q(I)$. The components $Z^q(A) \to W^q(I)$ and $Z^q(A) \to W^q(J)$ are determined by requiring a commutative diagram
\begin{equation*}\xymatrix{
& H^q(I) \\
Z^q(A) \ar[ur] \ar[r] & H^q(A) \ar[u]
}
\end{equation*}
in addition the commutative diagram
\begin{equation*}\xymatrix{
H^q(I) \ar[r] & W^q(J) \\
H^q(A) \ar[u] \ar[r] & W^q(B) \ar[u]
}
\end{equation*}
which follows from the construction of the map $H^q(A) \to H^q(I)$ ensures that we also have a commutative diagram
\begin{equation*}\xymatrix{
& W^q(J) \\
Z^q(A) \ar[ur] \ar[r] & W^q(B) \ar[u]
}
\end{equation*}
Next we need a map $Z^q(A) \to B^q(I)$. Let us use injectivity of $B^q(I)$ to choose such a map so that the following commutes
\begin{equation*}\xymatrix{
& B^q(I) \\
X^q(A) \ar[ur] \ar[r] & Z^q(A) \ar[u]
}
\end{equation*}
where $X^q(A) \to B^q(I)$ is the composition of the map $X^q(A) \to X^q(I)$ and the projection $X^q(I) \to B^q(I)$. Therefore we have a commutative diagram
\begin{equation*}\xymatrix{
& & B^q(I) \\
B^q(A) \ar[r] \ar[urr] & X^q(A) \ar[ur] \ar[r] & Z^q(A) \ar[u]
}
\end{equation*}
Finally we need to check that the composition $X^q(A) \to X^q(I) \to W^q(I)$ agrees with $X^q(A) \to Z^q(A) \to H^q(A) \to H^q(I) \to W^q(I)$, where the last map $H^q(I) \to W^q(I)$ is the projection. But this is straightforward since the composition $X^q(A) \to Z^q(A) \to H^q(A)$ equals the composition $X^q(A) \to W^q(A) \to H^q(A)$. From this is follows that the above two diagrams corresponding to sequences (\ref{es4}) and (\ref{es7}) commute.\\

By the exact same argument we construct an injection $Z^q(C) \to Z^q(K)$ yielding two commutative diagrams corresponding to (\ref{es6}) and (\ref{es9}). The next injection to construct is $X^q(B) \to X^q(J)$. Again there is a complication since we want to choose the map to yield two commutative diagrams corresponding to (\ref{es11}) and (\ref{es18}). From sequence (\ref{xk}) we have $X^q(J) = W^q(I) \oplus W^q(J) \oplus B^q(I) \oplus B^q(K)$ so we need to define maps $X^q(B) \to W^q(I)$, $X^q(B) \to W^q(J)$, $X^q(B) \to B^q(I)$ and $X^q(B) \to B^q(K)$. Of these maps the ones into $W^q(J)$ and $B^q(K)$ are already determined by commutativity. Since $W^q(I) \oplus B^q(I) = X^q(I)$ the remaining two terms can be expressed as a map $X^q(B) \to X^q(I)$. For commutativity we need that this map fits into a commutative diagram as follows:
\begin{equation*}\xymatrix{
& X^q(I) & \\
Z^q(A) \ar[ur]^a \ar[r] & X^q(B) \ar[u] & B^q(B) \ar[l] \ar[ul]_b
}
\end{equation*}
where the map $a : Z^q(A) \to X^q(I)$ is the composition of the map $Z^q(A) \to Z^q(I)$ and the projection $Z^q(I) \to X^q(I)$ and the map $b : B^q(B) \to X^q(I)$ is defined in a similar manner. To proceed let $i_1 : Z^q(A) \to X^q(B)$ and $i_2 : B^q(B) \to X^q(B)$ be the inclusions. We observe that the kernel of $(i_1,0)+(0,i_2) : Z^q(A) \oplus B^q(B) \to X^q(B)$ is precisely $(j_1,-j_2) : X^q(A) \to Z^q(A) \oplus B^q(B)$ where $j_1 : X^q(A) \to Z^q(A)$, $j_2 : X^q(A) \to B^q(B)$ are the inclusions. Next we observe that $a \circ j_1 = b \circ j_2$ since both maps are just the map $X^q(A) \to X^q(I)$ we have previously constructed. It follows that the map $(a,0)+(0,b) : Z^q(A) \oplus B^q(B) \to X^q(I)$ factors to a map $Q \to X^q(I)$ where $Q$ is the cokernel of $(j_1,-j_2) : X^q(A) \to Z^q(A) \oplus B^q(B)$. Obviously the map $(i_1,0)+(0,i_2) : Z^q(A) \oplus B^q(B) \to X^q(B)$ factors to an injection $Q \to X^q(B)$. Now since $X^q(I)$ is injective there exists a map $X^q(B) \to X^q(I)$ yielding a commutative diagram
\begin{equation*}\xymatrix{
& X^q(I) \\
Q \ar[r] \ar[ur] & X^q(B) \ar[u]
}
\end{equation*}
Moreover it follows easily that we have a commutative diagram as follows
\begin{equation*}\xymatrix{
& & X^q(I) \\
Z^q(A) \ar[r] \ar[urr]^a & Q \ar[r] \ar[ur] & X^q(B) \ar[u]
}
\end{equation*}
and similarly with $B^q(B)$ in place of $Z^q(A)$. Thus the map $X^q(B) \to X^q(I)$ has the desired properties. From this it follows easily that the corresponding map $X^q(B) \to X^q(J)$ we have now constructed yields commutative diagrams corresponding to sequences (\ref{es11}) and (\ref{es18}).\\

Along the same lines as has been described so far one can find an injective map $Z^q(B) \to Z^q(J)$ yielding commutative diagrams corresponding to (\ref{es5}),(\ref{es8}) and (\ref{es17}). Next one constructs injections $A^q \to I^q$ and $B^q \to K^q$ yielding commutative diagrams corresponding to (\ref{es13}) and (\ref{es15}). Finally one constructs an injection $B^q \to J^q$. Once all of this is done we have commutative diagrams corresponding to Equations (\ref{es1})-(\ref{es19}). From the construction of the differentials on $I^*,J^*$ and $K^*$ we see that the maps $A^* \to I^*$, $B^* \to J^*$ and $C^* \to K^*$ are chain maps. The commutative diagram corresponding to (\ref{es19}) is
\begin{equation*}\xymatrix{
0 \ar[r] & I^q \ar[r]  & J^q \ar[r]  & K^q \ar[r] & 0 \\
0 \ar[r] & A^q \ar[r] \ar[u] & B^q \ar[r] \ar[u] & C^q \ar[r] \ar[u] & 0
}
\end{equation*}
Finally by construction the induced maps $Z^q(A) \to Z^q(I)$, $Z^q(B) \to Z^q(J)$, $Z^q(C) \to Z^q(K)$ on cocycles are injective and similarly for coboundaries and cohomologies.\\

For the final statement about vanishing in negative degrees suppose $A^q = B^q = C^q = 0$ for $q<0$. It follows easily that $W^q(A) = W^q(B) = W^q(C) = B^q(A) = B^q(C)$ whenever $q < 0$. Therefore in the above constructions we may choose $W^q(I) = W^q(J) = W^q(K) = B^q(I) = B^q(K) = 0$ for $q < 0$ and it follows directly that $I^q = J^q = K^q = 0$ when $q < 0$.
\end{proof}


\section{The Grothendieck spectral sequence}\label{gss}

We recall the construction of the Grothendieck spectral sequence and establish some of its basic properties. Let $\mathcal{A},\mathcal{B},\mathcal{C}$ be abelian categories, $F : \mathcal{A} \to \mathcal{B}$, $G : \mathcal{B} \to \mathcal{C}$ left exact functors. Suppose $\mathcal{A},\mathcal{B}$ have enough injectives and $F$ sends injective objects to $G$-acyclic objects. Let $A$ be an object of $\mathcal{A}$. Let $M^*, A \to M^0$ be an injective resolution of $A$ and set $A^* = F(M^*)$. Let $0 \to A^* \to I^{0,*} \to I^{1,*} \to \cdots$ be a Cartan-Eilenberg resolution of $A^*$. Since $\mathcal{B}$ has enough injectives it is well known that a Cartan-Eilenberg resolution exists, but we can also deduce this from Theorem \ref{ce} using the complex $0 \to A^* \to A^* \to 0$. Now set $R^{p,q} = G(I^{p,q})$ to obtain a double complex. Note that using Theorem \ref{ce} we can also assume that $I^{p,q} = 0$ if $p<0$ or $q<0$ and so the same is true of $R^{p,q}$. As usual for a double complex we have two natural filtrations and thus two spectral sequences associated to the double complex $R^{p,q}$, both abutting to the cohomology of the associated single complex $R^*$. We consider these two spectral sequences in turn.\\ 

Consider first the spectral sequence corresponding to the filtration by $q$-degree, the terms of which we denote by $\tilde{E}_r^{p,q}$. By assumption each $A^p = F(M^p)$ is $G$-acyclic since $M^p$ is injective. The $\tilde{E}_1^{p,q}$ terms are obtained by taking cohomology of the double complex $R^{p,q}$ in the $p$ direction so we find
\begin{equation}\label{etild1}
\tilde{E}_1^{p,q} = \left\{ \begin{matrix} G(A^q) & p = 0,\\ 0 & p > 0. \end{matrix} \right.
\end{equation}
Note also that $G(A^q) = (G \circ F)(M^q)$ so that on passing to the next stage of the spectral sequence we have
\begin{equation*}
\tilde{E}_2^{p,q} = \tilde{E}_\infty^{p,q} = \left\{ \begin{matrix} R^q(G \circ F)(A) & p = 0,\\ 0 & p > 0. \end{matrix} \right.
\end{equation*}
We deduce that the degree $n$ cohomology of the single complex associated to $R^{p,q}$ is given by $R^n(G \circ F)(A)$.\\

Consider now the second spectral sequence associated to $R^{p,q}$ corresponding to the filtration $F^kR^n$ by $p$-degree where $F^k R^n = \bigoplus_{p,q | p \ge k} R^{p,q}$. We denote by $E_r^{p,q}$ the associated spectral sequence which as we now know abuts to $R^n(G \circ F)(A)$. To be more precise there is a filtration
\begin{eqnarray*}
&&0 = F^{n+1}R^n(G \circ F)(A) \subseteq F^{n}R^n(G \circ F)(A) \subseteq \cdots \\
&& \; \; \cdots \subseteq F^{1}R^n(G \circ F)(A) \subseteq F^{0}R^n(G \circ F)(A) = R^n(G \circ F)(A)
\end{eqnarray*}
of $R^n(G \circ F)(A)$ such that $E_\infty^{p,q} \simeq F^pR^{p+q}(G \circ F)(A)/F^{p+1}R^{p+q}(G \circ F)(A)$. In fact if we let $H^n(F^pR^*)$ denote the cohomology of $F^pR^*$ then $F^pR^n(G \circ F)(A)$ is the image of $H^n(F^pR^*)$ under the map $H^n(F^pR^*) \to H^n(R^*)$ induced by the inclusion $F^pR^* \to R^*$.\\

Getting back to the spectral sequence $E_r^{p,q}$, we first compute $E_1^{p,q}$ by taking the cohomology of the double complex $R^{p,q}$ in the $q$ direction. For this we make use of the fact that $R^{p,q} = G(I^{p,q})$ where $I^{p,q}$ is a Cartan-Eilenberg resolution of $A^q$. If $Z^{p,q},B^{p,q},H^{p,q}$ denote the cocycles, coboundaries and cohomologies of $I^{p,q}$ in the $q$ direction then since $I^{p,q}$ is a Cartan-Eilenberg resolution the $Z^{p,q},B^{p,q},H^{p,q}$ are all injective and so the sequences $0 \to B^{p,q} \to Z^{p,q} \to H^{p,q} \to 0$ and $0 \to Z^{p,q} \to I^{p,q} \to B^{p,q+1} \to 0$ are split. Applying $G$ it easily follows that the cohomology of $R^{p,q}$ in the $q$ direction is $G( H^{p,q})$ so that $E_1^{p,q} = G( H^{p,q} )$. Using the Cartan-Eilenberg property again we have that $0 \to R^qF(A) \to H^{0,q} \to H^{1,q} \to \cdots$ is an injective resolution of $R^qF(A)$ so that the $E_2$ stage of this spectral sequence is given by
\begin{equation*}
E_2^{p,q} = R^pG( R^qF(A)).
\end{equation*}
This is the {\em Grothendieck spectral sequence} \cite{ce},\cite{gm}. One can show that from the $E_2$ stage onwards the spectral sequence does not depend on the choice of Cartan-Eilenberg resolution \cite{voi2}.


\section{Behavior on short exact sequences}\label{bses}

For any object $A$ in $\mathcal{A}$ let $F^p R^n(G \circ F)(A)$ denote the filtration on $R^n(G \circ F)(A)$ corresponding to the Grothendieck spectral sequence $(E_r^{p,q}(A) , d_r )$. In particular $E_\infty^{p,q}(A) \simeq F^p R^n (G \circ F)(A) / F^{p+1} R^n (G \circ F)(A)$. Recall also that $E_2^{p,q}(A) = R^pG( R^q F (A))$.

\begin{theorem}\label{main}
Let $0 \to A \to B \to C \to 0$ be a short exact sequence in $\mathcal{A}$. There are morphisms $\delta_r : E_r^{p,q}(C) \to E_r^{p,q+1}(A)$ for $r \ge 2$ between the Grothendieck spectral sequences for $C$ and $A$ with the following properties:
\begin{itemize}
\item{$\delta_r$ commutes with the differentials $d_r$ and the induced map at the $(r+1)$-stage is $\delta_{r+1}$.}
\item{$\delta_2 : R^pG( R^q F(C)) \to R^pG( R^{q+1}F(A))$ is the map induced by the boundary morphism $R^qF(C) \to R^{q+1}F(A)$ in the long exact sequence of derived functors of $F$ associated to $0 \to A \to B \to C \to 0$.}
\item{The boundaries $R^n(G \circ F)(C) \to R^{n+1}(G \circ F)(A)$ for the long exact sequence associated to $G \circ F$ send $F^p R^n(G \circ F)(C)$ to $F^p R^{n+1}(G \circ F)(A)$ and thus induce maps $E_\infty^{p,q}(C) \to E_\infty^{p,q+1}(A)$. These maps coincide with $\delta_\infty$ where $\delta_\infty$ denotes the limit of the $\delta_r$.}
\end{itemize}
\end{theorem}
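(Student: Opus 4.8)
The plan is to funnel everything through Theorem~\ref{ce}. First I would invoke the Horseshoe Lemma for $0 \to A \to B \to C \to 0$ to obtain injective resolutions fitting into a short exact sequence of complexes $0 \to M_A^\ast \to M_B^\ast \to M_C^\ast \to 0$ which is split in each degree. Applying the additive functor $F$ yields a short exact sequence $0 \to A^\ast \to B^\ast \to C^\ast \to 0$ in $\mathcal{B}$, with $A^\ast = F(M_A^\ast)$ and similarly for $B^\ast, C^\ast$. Theorem~\ref{ce} then supplies Cartan--Eilenberg resolutions $I^{\ast,\ast}, J^{\ast,\ast}, K^{\ast,\ast}$ of $A^\ast, B^\ast, C^\ast$ sitting in a commutative diagram of double complexes with exact rows and with $I^{p,q}=J^{p,q}=K^{p,q}=0$ unless $p,q\ge 0$. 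Since each $I^{p,q}$ is injective, the monomorphism $I^{p,q}\hookrightarrow J^{p,q}$ splits, so applying $G$ gives a short exact sequence of double complexes $0 \to R_A^{\ast,\ast} \to R_B^{\ast,\ast} \to R_C^{\ast,\ast} \to 0$, with $R_A^{\ast,\ast}=G(I^{\ast,\ast})$ and so on, which is split in every bidegree. By Section~\ref{gss}, the $p$-filtrations of the associated total complexes and their spectral sequences recover the filtrations $F^p R^n(G\circ F)(-)$ and the Grothendieck spectral sequences $E_r^{p,q}(-)$ of $A$, $B$, $C$.

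Next I would extract a connecting chain map. Choosing a splitting of $R_A^{p,q}\to R_B^{p,q}$ in each bidegree identifies the total complex of $R_B^{\ast,\ast}$, as a graded object compatibly with the column filtration, with the direct sum of the total complexes of $R_A^{\ast,\ast}$ and $R_C^{\ast,\ast}$; the total differential then becomes upper triangular, and its off-diagonal entry is a chain map $\partial\colon \operatorname{Tot}(R_C^{\ast,\ast}) \to \operatorname{Tot}(R_A^{\ast,\ast})[1]$ of filtration degree $0$ that represents the connecting homomorphism in cohomology. A filtered chain map induces a morphism of the associated spectral sequences, and a degree-$1$ shift with the filtration left unreindexed sends $E_r^{p,q}$ to $E_r^{p,q+1}$; hence $\partial$ induces morphisms $\delta_r\colon E_r^{p,q}(C) \to E_r^{p,q+1}(A)$ for all $r$. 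Functoriality of the spectral-sequence construction then gives the first bullet for free: $\delta_r$ commutes with $d_r$ and the map it induces on $E_{r+1}$ is $\delta_{r+1}$. For $r\ge 2$ this is a morphism between the Grothendieck spectral sequences themselves, using their independence from the choice of Cartan--Eilenberg resolution.

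To identify $\delta_2$, I would first compute $\delta_1$. On $E_1$ the map $\delta_1^{p,q}$ is induced by the bidegree-$(0,1)$ part of $\partial$, which is the connecting map of the degreewise-split sequence $0 \to G(I^{p,\ast}) \to G(J^{p,\ast}) \to G(K^{p,\ast}) \to 0$; because in a Cartan--Eilenberg resolution the cocycles and coboundaries are injective, hence split off after applying $G$, this is $G$ applied to the connecting map $H^q(K^{p,\ast}) \to H^{q+1}(I^{p,\ast})$. Naturality of connecting homomorphisms with respect to the $p$-differentials of the Cartan--Eilenberg diagram makes these, as $p$ varies, into a chain map between the injective resolutions $\{H^q(K^{p,\ast})\}_p$ of $R^qF(C)$ and $\{H^{q+1}(I^{p,\ast})\}_p$ of $R^{q+1}F(A)$, lifting the connecting map $R^qF(C)=H^q(C^\ast)\to H^{q+1}(A^\ast)=R^{q+1}F(A)$ of the long exact sequence of $F$. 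Applying $H^pG(-)$ then identifies $\delta_2^{p,q}$ with $R^pG$ of that map, which is the second bullet.

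Finally, since $\partial$ is a filtered chain map it restricts to the $F^p$-subcomplexes, so $H^\ast(\partial)$ carries $F^pR^n(G\circ F)(C)$ into $F^pR^{n+1}(G\circ F)(A)$ and induces on associated graded precisely $\partial_\infty=\delta_\infty$; moreover $H^\ast(\partial)$ is the connecting homomorphism of $0\to A\to B\to C\to 0$ for $G\circ F$, since $\operatorname{Tot}(R_\bullet^{\ast,\ast})$ computes $R^\ast(G\circ F)$ and a standard comparison identifies the resulting long exact sequence with the derived-functor one. That is the third bullet. I expect the main obstacle to be bookkeeping rather than conceptual: verifying that $\partial$ is genuinely a chain map of filtration degree $0$ with the correct sign, that the shift conventions line up with $E_r^{p,q}(-[1])=E_r^{p,q+1}(-)$, and that passing through the Cartan--Eilenberg structure reproduces exactly the $F$-connecting map and not a twist of it; keeping all of these compatible simultaneously for the three complexes is the delicate part.
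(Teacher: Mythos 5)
Your proposal is correct and follows the paper's overall strategy exactly: Horseshoe Lemma, then Theorem~\ref{ce} applied to $0 \to F(M^*) \to F(N^*) \to F(P^*) \to 0$, then $G$ of the resulting short exact sequence of Cartan--Eilenberg resolutions, with the same identification of $\delta_2$ via the maps $H^q(K^{p,*}) \to H^{q+1}(I^{p,*})$ of injective resolutions. The one step where you genuinely diverge is the construction of $\delta_r$ itself. The paper works with exact couples: it defines $\delta$ on $A_1$ and $E_1$ as the connecting homomorphisms of $0 \to F^pR^* \to F^pS^* \to F^pT^* \to 0$ and of the quotient sequences $0 \to F^pR^*/F^{p+1}R^* \to \cdots$, and must then verify (up to sign) that these intertwine the structure maps $i,j,k$ before passing to derived couples. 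You instead exploit the bidegreewise splitting of $0 \to G(I^{p,q}) \to G(J^{p,q}) \to G(K^{p,q}) \to 0$ (available because each $I^{p,q}$ is injective) to realize the connecting morphism as an honest chain map $\partial\colon \operatorname{Tot}(R_C^{*,*}) \to \operatorname{Tot}(R_A^{*,*})[1]$ of filtration degree $0$, and then invoke the standard functoriality of the spectral sequence of a filtered complex. Your route buys the compatibility with $d_r$ and the passage to $E_{r+1}$ for free, and makes the filtration statement in the third bullet immediate; its small price is that you should note that $\partial$ depends on the chosen splittings only up to a filtered homotopy of degree $0$ (two splittings differ by a bidegree-$(0,0)$ map $h\colon R_C \to R_A$, and the corresponding $\partial$'s differ by $d_A h \pm h\, d_C$), so that the induced $\delta_r$ for $r \ge 1$ is canonical. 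Both arguments place all the real weight on Theorem~\ref{ce}, as they must.
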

\begin{proof}
Let $M^*,N^*,P^*$ be injective resolutions of $A,B,C$ respectively. By the Horseshoe lemma \cite{wei} we can choose $M^*,N^*,P^*$ so that there exists a commutative diagram of objects in $\mathcal{A}$ where the columns are short exact sequences
\begin{equation*}\xymatrix{
0 \ar[r] &  A   \ar[d] \ar[r] &  M^0   \ar[r] \ar[d] & M^1   \ar[d] \ar[r] &  \cdots  \\
0 \ar[r] &  B  \ar[d] \ar[r] &  N^0  \ar[r] \ar[d] & N^1  \ar[d] \ar[r] &  \cdots  \\
0 \ar[r] &  C        \ar[r] &  P^0 \ar[r]        & P^1        \ar[r] &  \cdots
}
\end{equation*}
We thus have a short exact sequence $0 \to M^* \to N^* \to P^* \to 0$ of chain complexes. The sequence $0 \to F(M^*) \to F(N^*) \to F(P^*) \to 0$ is also exact since the $M^q$ are injective. Thus we may apply Theorem \ref{ce} to find Cartan-Eilenberg resolutions $0 \to F(M^*) \to I^{0,*} \to I^{1,*} \to \cdots$, $0 \to F(N^*) \to J^{0,*} \to J^{1,*} \to \cdots$, $0 \to F(M^*) \to K^{0,*} \to K^{1,*} \to \cdots$ with the properties described in Theorem \ref{ce}, in particular we can assume $I^{p,q} = J^{p,q} = K^{p,q} = 0$ if $p<0$ or $q < 0$.\\

Next we get double complexes $R^{p,q} = G(I^{p,q})$, $S^{p,q} = G(J^{p,q})$, $T^{p,q} = G(K^{p,q})$ with terms in $\mathcal{C}$ by applying the functor $G$. If we let $R^*,S^*,T^*$ denote the associated single complexes then we know that the degree $n$ cohomology $H^n(R^*)$ of $R^*$ coincides with $R^n(G \circ F)(A)$. Similarly for $S^*$ and $T^*$.\\

By assumption $F$ takes injective objects of $\mathcal{A}$ to $G$-acyclic objects in $\mathcal{B}$. It follows that the natural sequences $0 \to R^{p,q} \to S^{p,q} \to T^{p,q} \to 0$ and $0 \to R^n \to S^n \to T^n \to 0$ are exact. We claim that the long exact sequence $0 \to H^0(R^*) \to H^0(S^*) \to H^0(T^*) \to H^1(R^*) \to H^1(S^*) \to \dots$ coincides with the long exact sequence $0 \to (G \circ F)(A) \to (G \circ F)(B) \to (G \circ F)(C) \to R^1(G \circ F)(A) \to R^1(G \circ F)(B) \to \dots$. Indeed we will show that the map $(G \circ F)(M^*) \to R^*$ obtained from the composition $(G \circ F)(M^*) \to G( I^{0,*} ) \to R^*$ is a quasi-isomorphism, similarly for $B,C$. We thus have a commutative diagram of short exact sequences of complexes in $\mathcal{C}$
\begin{equation*}\xymatrix{
0 \ar[r] & (G \circ F)(M^*) \ar[r] \ar[d] & (G \circ F)(N^*) \ar[r] \ar[d] & (G \circ F)(P^*) \ar[r] \ar[d] & 0 \\
0 \ar[r] & R^* \ar[r]    & S^*  \ar[r]   & T^*   \ar[r] &    0
}
\end{equation*}
such that the vertical arrows are quasi-isomorphisms. The top row is exact since $F$ takes injectives to $G$-acyclic objects. We thus get a chain isomorphism between the corresponding long exact sequences. Note that the degree $n$ cohomology of $(G \circ F)(M^*)$ is precisely $R^n(G \circ F)(A)$ and similarly for $B,C$. Now to finish the claim we must show that $(G \circ F)(M^*) \to R^*$ is a quasi-isomorphism. For this we introduce filtrations $F'^k (G \circ F)(M^*) $ on $(G \circ F)(M^*)$ and $F'^k R^*$ on $R^*$. We set $F'^0 (G \circ F)(M^*) = (G \circ F)(M^*)$ and $F'^k (G \circ F)(M^*) = 0$ if $k > 0$. For $R^*$ we take $F'^k R^* = \bigoplus_{p,q | q \ge k} G(I^{p,q})$. The map $(G \circ F)(M^*) \to R^*$ is easily seen to preserve the filtrations so induces a map between the spectral sequences for these filtrations. One finds easily that the map at the $E_1$-stage is an isomorphism. Indeed $E_1^{p,q}$ for the filtration on $R^*$ is given by Equation (\ref{etild1}), where $A^q$ denotes $F(M^q)$. From this we indeed find that the map is an isomorphism at the $E_1$-stage. This is enough to show that $(G \circ F)(M^*) \to R^*$ is a quasi-isomorphism.\\

Introduce a filtration $F^k R^*$ on $R^*$ by setting $F^k S^* = \bigoplus_{p,q | p \ge k} R^{p,q}$. This filtration yields a corresponding filtration $F^k H^n(R^*)$ on the cohomology $H^n(R^*)$ by letting $F^p H^n(R^*)$ be the image in $H^n(R^*)$ of the degree $n$ cohomology of $F^p R^*$ under the natural inclusion $F^p R^* \to R^*$. We have a spectral sequence $(E_r^{p,q}(R^*) , d_r )$ corresponding to the filtration so $E_\infty^{p,q}(R^*) \simeq F^pS^{p+q} / F^{p+1}S^{p+q}$. Similarly for $S^*,T^*$ we have filtrations and spectral sequences defined in the same manner. In fact since $I^{p,q},J^{p,q},K^{p,q}$ are Cartan-Eilenberg resolutions we have as shown in Section \ref{gss} that the resulting spectral sequences $(E_r^{p,q}(R^*),d_r)$, $(E_r^{p,q}(S^*),d_r)$, $(E_r^{p,q}(T^*),d_r)$ are the Grothendieck spectral sequences corresponding to $A,B,C$. We thus have $E_2^{p,q}(R^*) = R^pG(R^qF(A))$, $E_2^{p,q}(S^*) = R^pG(R^qF(B))$ and $E_2^{p,q}(T^*) = R^pG(R^qF(C))$.\\

The spectral sequences for $R^*$ and $T^*$ are determined by corresponding exact couples $(A_1(R^*) , E_1(R^*))$, $(A_1(T^*) , E_1(T^*))$ where
\begin{eqnarray*}
A^{p,q}_1(R^*) &=& \bigoplus_{p,q} H^{p+q}( F^p R^* ) \\
E^{p,q}_1(R^*) &=& \bigoplus_{p,q} H^{p+q}( F^p R^* / F^{p+1} R^* )
\end{eqnarray*}
and similarly for $T^*$. To define the exact couple $(A_1(R^*),E_1(R^*))$ we must also give maps $i : A_1(R^*) \to A_1(R^*)$, $j : A_1(R^*) \to E_1(R^*)$ and $k : E_1(R^*) \to A_1(R^*)$. We take $i : A^{p,q}_1(R^*) \to A_1^{p-1,q+1}(R^*)$ to be the map in cohomology induced by the inclusions $ F^{p}R^* \to F^{p-1} R^*$, $j : A_1^{p,q}(R^*) \to E_1^{p,q}(R^*)$ induced by the projection $F^p R^* \to F^p R^* / F^{p+1} R^*$ and $k : E_1^{p,q}(R^*) \to A_1^{p+1,q}(R^*)$ to be the coboundary in the long exact sequence for $0 \to F^{p+1}R^* \to F^pR^* \to F^pR^* / F^{p+1} R^* \to 0$. Define similar maps $i,j,k$ in the case of $T^*$.\\

We define a map $\delta : (A_1(T^*) , E_1(T^*)) \to (A_1(R^*) , E_1(R^*))$ between exact couples. By this we mean a pair of morphisms $\delta : A_1(T^*) \to A_1(R^*)$, $\delta : E_1(T^*) \to E_1(R^*)$ intertwining the maps $i,j,k$ of the exact couples. More precisely we will define maps $\delta : A_1^{p,q}(T^*) \to A_1^{p,q+1}(R^*)$, $\delta : E_1^{p,q}(T^*) \to E_1^{p,q+1}(R^*)$ as follows. The short exact sequence $0 \to R^* \to S^* \to T^* \to 0$ yields corresponding short exact sequences $0 \to F^p R^* \to F^p S^* \to F^p T^* \to 0$ on the filtrations and thus we obtain boundary maps $H^n( F^p T^*) \to H^{n+1}( F^p R^*)$. This defines the map $\delta : A_1^{p,q}(T^*) \to A_1^{p,q+1}(R^*)$. By the Nine lemma we get exact sequences $0 \to F^p R^* / F^{p+1} R^* \to F^p S^* / F^{p+1} S^* \to F^p T^* / F^{p+1} T^* \to 0$ and thus boundary maps $H^n( F^p T^* / F^{p+1} T^* ) \to H^{n+1}( F^p R^* / F^{p+1} R^* )$ which we take as the definition of $\delta : E_1^{p,q}(T^*) \to E_1^{p,q+1}(T^*)$. One needs to show that the maps $\delta$ so defined intertwine\footnote{strictly speaking the maps $\delta$ only intertwine $i,j,k$ up to certain irrelevant sign factors} the maps $i,j,k$ and then we have a morphism between exact couples. When one passes to the derived exact couples the $\delta$ maps induce corresponding maps on the derived exact couples. Thus we get maps $\delta_r : E_r^{p,q}(T^*) \to E_r^{p,q+1}(R^*)$ which are the maps in the statement of the theorem. By definition of the filtrations $F^p H^n(R^*)$, $F^p H^n(T^*)$ we see that the boundary map $H^n(T^*) \to H^{n+1}(R^*)$ sends $F^p H^n(T^*)$ to $F^p H^{n+1}(R^*)$. Upon identification of $H^n(R^*)$ with $R^n(G \circ F)(A)$ and $H^n(T^*)$ with $R^n(G \circ F)(C)$ we see as claimed that the natural boundary map $R^n(G \circ F)(C) \to R^{n+1}(G \circ F)(A)$ sends $F^p R^n(G \circ F)(C)$ to $F^p R^{n+1}(G \circ F)(A)$ and that the induced maps $E_\infty^{p,q}(T^*) \to E_\infty^{p,q+1}(R^*)$ coincide with $\delta_\infty$.\\

To finish the proof we must show that $\delta_2$ has the expected form. First note that $E_1^{p,q}(R^*)$ can be identified with $G( H^q( I^{p,*} ) )$ and similarly for $T^*$, while the boundary maps $E_1^{p,q}(T^*) \to E_1^{p,q+1}(R^*)$ are easily seen to arise from the boundary maps $H^{q}( K^{p,*} ) \to H^{q+1}( I^{p,*} )$. To pass from $E_1$ to $E_2$ one then takes cohomology with respect to the differential induced by the maps $I^{*,p} \to I^{*,p+1}$. Since $I^{*,*}$ is a Cartan-Eilenberg resolution we have that $\{ H^q( I^{p,*}) \}_{p \ge 0}$ is an injective resolution of $H^q (F(M^*)) = R^qF(A)$. On applying $G$ and taking cohomology in the $p$ direction we get $E_2^{p,q}(R^*) = R^pG ( R^qF(A))$. As usual a similar statement holds for $C$. Now observe that $H^{q}( K^{p,*} ) \to H^{q+1}( I^{p,*} )$ is a map between injective resolutions of $H^q( F(P^*)) = R^q F(C) $ and $H^{q+1}( F(M^*)) = R^{q+1}F(A)$ commuting with the boundary map $R^q F(C) \to R^{q+1}F(A)$ so on applying $G$ and taking cohomology in the $p$ direction we see that the induced map $E_2^{p,q}(C) \to E_2^{p,q+1}(A)$ is indeed the map $R^pG (R^qF(C)) \to R^pG (R^{q+1}F(A))$ induced by the boundary $R^qF(C) \to R^{q+1} F(A)$.
\end{proof}


\section{Application}\label{app}

The main application of Theorem \ref{main} we have in mind is to the Leray spectral sequence for sheaf cohomology. Let $X,Y$ be topological spaces and $f : X \to Y$ a continuous map. We take $\mathcal{A},\mathcal{B}$ to be the categories of sheaves of abelian groups on $X$ and $Y$ respectively and $\mathcal{C}$ the category of abelian groups. We take the functor $F : \mathcal{A} \to \mathcal{B}$ to be the push-forward $F = f_*$ under $f$ and $G : \mathcal{B} \to \mathcal{C}$ to be the global sections functor $G = \Gamma$. Note that $G \circ F$ is the global sections functor for sheaves on $X$.\\

To derive the Leray spectral sequence from the Grothendieck spectral sequence one only needs to note that the category of sheaves of abelian groups on a topological space has enough injectives \cite[Lemma 1.1.13]{bry} and that the push-forward functor $F = f_*$ actually sends injectives to injectives \cite[Lemma 1.6.3]{bry}. For any sheaf $A$ on $X$ we thus obtain the Leray spectral sequence, a spectral sequence $\{ E_r^{p,q} , d_r \}$ which abuts to the sheaf cohomology $H^*(X,A)$ and such that $E_2^{p,q}(A) = H^p(Y, R^qf_* A)$. As with the Grothendieck spectral sequence there is a natural filtration
\begin{eqnarray*}
0 = F^{n+1,n}(A) \subseteq F^{n,n}(A) \subseteq \cdots \subseteq F^{1,n}(A) \subseteq F^{0,n}(A) = H^n(X,A)
\end{eqnarray*}
related to the spectral sequence by 
\begin{equation*}
E_\infty^{p,q}(A) \simeq F^{p,p+q}(A)/F^{p+1,p+q}(A).
\end{equation*}
Theorem \ref{main} translated to this situation becomes
\begin{theorem}\label{lss}
Let $X,Y$ be topological spaces and $f : X \to Y$ a continuous map. Let $0 \to A \to B \to C \to 0$ be a short exact sequence of sheaves. There are morphisms $\delta_r : E_r^{p,q}(C) \to E_r^{p,q+1}(A)$ for $r \ge 2$ between the Leray spectral sequences for $C$ and $A$ with the following properties:
\begin{itemize}
\item{$\delta_r$ commutes with the differentials $d_r$ and the induced map at the $(r+1)$-stage is $\delta_{r+1}$.}
\item{$\delta_2 : H^p( Y , R^q f_*C) \to H^p( Y , R^{q+1}f_*A)$ is the map induced by the boundary morphism $R^qf_*C \to R^{q+1}f_*A$ in the long exact sequence of higher direct image functors of $f$ associated to $0 \to A \to B \to C \to 0$.}
\item{The boundaries $H^n(X,C) \to H^{n+1}(X,A)$ for the long exact sequence of sheaf cohomology send $F^{p,n}(C)$ to $F^{p,n+1}(A)$ and thus induce maps $E_\infty^{p,q}(C) \to E_\infty^{p,q+1}(A)$. These maps coincide with $\delta_\infty$ where $\delta_\infty$ denotes the limit of the $\delta_r$.}
\end{itemize}
\end{theorem}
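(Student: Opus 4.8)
The plan is to obtain Theorem \ref{lss} as a direct specialization of Theorem \ref{main}, so the content of the proof is simply to verify that the hypotheses of the Grothendieck spectral sequence machinery of Section \ref{gss} hold in the present setting and then to translate notation. First I would note that the category of sheaves of abelian groups on any topological space has enough injectives \cite[Lemma 1.1.13]{bry}, so $\mathcal{A}$ (sheaves on $X$) and $\mathcal{B}$ (sheaves on $Y$) satisfy the standing assumptions. Next comes the one hypothesis that needs a word of justification: that $F = f_*$ sends injectives to $G$-acyclic objects. In fact $f_*$ carries injective sheaves on $X$ to injective sheaves on $Y$ by \cite[Lemma 1.6.3]{bry}, and an injective sheaf is flasque, hence acyclic for the global sections functor $G = \Gamma$; so injective sheaves on $X$ are sent by $f_*$ to $G$-acyclic sheaves on $Y$.

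With these facts in place, for each sheaf $A$ on $X$ the construction of Section \ref{gss} yields a spectral sequence with $E_2^{p,q}(A) = R^pG(R^qF(A))$ abutting to $R^n(G \circ F)(A)$, together with the filtration $F^p R^n(G \circ F)(A)$. One then reads off the dictionary: $R^qF = R^q f_*$ is the $q$-th higher direct image, $R^pG(-) = H^p(Y,-)$, and since $G \circ F = \Gamma_Y \circ f_*$ is the global sections functor on $X$ we have $R^n(G \circ F)(A) = H^n(X,A)$; writing $F^{p,n}(A)$ for $F^p R^n(G \circ F)(A)$ we recover exactly the Leray spectral sequence, its abutment and its filtration as recalled before the statement. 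Under this translation, the long exact sequence of derived functors of $F$ associated to $0 \to A \to B \to C \to 0$ becomes the long exact sequence of higher direct images, with boundary $R^q f_* C \to R^{q+1} f_* A$, and the long exact sequence associated to $G \circ F$ becomes the long exact cohomology sequence of $X$, with connecting map $H^n(X,C) \to H^{n+1}(X,A)$.

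Having set up these identifications, the three bullet points of Theorem \ref{lss} are literally the three bullet points of Theorem \ref{main} read through this dictionary: the first is formal and transcribes verbatim, the second identifies $\delta_2$ with the map on $H^p(Y,-)$ induced by the boundary $R^q f_* C \to R^{q+1} f_* A$, and the third says the sheaf-cohomology connecting homomorphism $H^n(X,C) \to H^{n+1}(X,A)$ carries $F^{p,n}(C)$ into $F^{p,n+1}(A)$ and induces $\delta_\infty$ on the associated graded objects. I do not expect any genuine obstacle; the only step requiring care is the verification that $f_*$ preserves $G$-acyclicity, and that is immediate from the cited facts that $f_*$ of an injective sheaf is injective together with the flasqueness, hence $\Gamma$-acyclicity, of injective sheaves. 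Everything else is bookkeeping.
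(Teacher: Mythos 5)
Your proposal is correct and matches the paper's treatment: Theorem \ref{lss} is stated there as an immediate specialization of Theorem \ref{main}, justified by exactly the two facts you cite (sheaves of abelian groups have enough injectives, and $f_*$ sends injectives to injectives, hence to $\Gamma$-acyclic sheaves), followed by the same notational dictionary. Nothing further is needed.
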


Let us now consider a specific application of this result we have in mind. Henceforth we assume that the topological spaces $X,Y$ are paracompact. Furthermore assume that every subspace of $X$ is paracompact. This is the case for instance if $X$ is a metric space \cite[Theorem 5.13]{eng}. We use the notation $\underline{\mathbb{C}},\underline{\mathbb{C}}^*$ to denote the sheaves of continuous functions with values in $\mathbb{C},\mathbb{C}^*$, where $\mathbb{C}^*$ is the non-zero complex numbers.

\begin{theorem}\label{cz}
Let $f : X \to Y$ be a map between spaces $X,Y$, let $E_r^{p,q}(\mathbb{Z}),E_r^{p,q}(\underline{\mathbb{C}}^*)$ be the Leray spectral sequences associated to the sheaves $\mathbb{Z},\underline{\mathbb{C}}^*$ and let $F^{p,n}(\mathbb{Z}), \linebreak F^{p,n}(\underline{\mathbb{C}}^*)$ be the associated filtrations on $H^n(X,\mathbb{Z})$,$H^n(X,\underline{\mathbb{C}}^*)$. Then
\begin{itemize}
\item{The coboundary $\delta : H^n(X,\underline{\mathbb{C}}^*) \to H^{n+1}(X,\mathbb{Z})$ restricts to morphisms $\delta : F^{p,p+q}(\underline{\mathbb{C}}^*) \to F^{p,p+q+1}(\mathbb{Z})$ which are isomorphisms whenever $p+q \ge 1$ and surjective for $p=q=0$.}
\item{The induced quotient maps $E_\infty^{p,q}(\underline{\mathbb{C}}^*) \to E_\infty^{p,q+1}(\mathbb{Z})$ are isomorphisms for $q \ge 1$ and surjections for $q = 0$.}
\end{itemize}
\end{theorem}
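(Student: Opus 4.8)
The plan is to apply Theorem \ref{lss} to the exponential short exact sequence $0 \to \mathbb{Z} \to \underline{\mathbb{C}} \to \underline{\mathbb{C}}^* \to 0$ on $X$, with $A = \mathbb{Z}$, $B = \underline{\mathbb{C}}$, $C = \underline{\mathbb{C}}^*$, and to exploit the fact that $\underline{\mathbb{C}}$ is a fine (hence soft, hence acyclic) sheaf on every subspace of $X$. First I would recall why the paracompactness hypotheses matter: $\underline{\mathbb{C}}$ is a sheaf of modules over the sheaf of rings of continuous $\mathbb{C}$-valued functions, which admits partitions of unity subordinate to any open cover on a paracompact space, so $\underline{\mathbb{C}}$ is soft and therefore $H^n(U,\underline{\mathbb{C}}) = 0$ for $n \ge 1$ and every open $U \subseteq X$ — and by the assumption that every subspace of $X$ is paracompact this vanishing persists on all the (locally closed) pieces one meets when computing stalks of higher direct images. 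From this I would deduce two vanishing statements: (i) the sheaves $R^q f_* \underline{\mathbb{C}}$ vanish for $q \ge 1$, since the stalk $(R^q f_* \underline{\mathbb{C}})_y$ is computed by $H^q$ of $\underline{\mathbb{C}}$ on small neighbourhoods of the fibre (or more carefully, a colimit of such cohomology groups over paracompact subspaces), all of which vanish; and (ii) $H^p(Y, R^q f_* \underline{\mathbb{C}})$ contributes nothing to the Leray spectral sequence of $\underline{\mathbb{C}}$ beyond $q = 0$, and in fact $H^n(X,\underline{\mathbb{C}}) = 0$ for $n \ge 1$.

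Next I would run the long exact sequences through the machinery of Theorem \ref{lss}. On the $E_2$-level, $\delta_2 : E_2^{p,q}(\underline{\mathbb{C}}^*) \to E_2^{p,q+1}(\mathbb{Z})$ is $H^p(Y,-)$ applied to the boundary sheaf map $R^q f_* \underline{\mathbb{C}}^* \to R^{q+1} f_* \mathbb{Z}$ from the long exact sequence of higher direct images. Because $R^q f_* \underline{\mathbb{C}} = 0$ for $q \ge 1$, this long exact sequence immediately gives that $R^q f_* \underline{\mathbb{C}}^* \to R^{q+1} f_* \mathbb{Z}$ is an isomorphism of sheaves for $q \ge 1$, and for $q = 0$ it is surjective (with kernel the image of $f_* \underline{\mathbb{C}}$). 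Applying the exact functor… rather, applying $H^p(Y,-)$ — which is not exact, so here one is careful — one nonetheless concludes $\delta_2$ is an isomorphism whenever $q \ge 1$ (isomorphic sheaves have isomorphic cohomology) and, when $q = 0$, $\delta_2 : H^p(Y, f_*\underline{\mathbb{C}}^*) \to H^p(Y, R^1 f_* \mathbb{Z})$ is the map induced by a surjection of sheaves. Then I would argue by induction on $r$, using the first bullet of Theorem \ref{lss} ($\delta_{r+1}$ is the map induced by $\delta_r$ on cohomology of the $d_r$-complex): if $\delta_r^{p,q}$ is an isomorphism for all $q \ge 1$ then, since $d_r$ shifts the second index, passing to $E_{r+1}$ preserves this; one has to check the bookkeeping at the edge $q = 0$ where an incoming or outgoing differential could interact with a $q = 1$ term, but the five-lemma on the relevant piece of the $d_r$-complex handles it and yields that $\delta_{r+1}$ is an isomorphism for $q \ge 1$ and a surjection for $q = 0$. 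Taking $r \to \infty$ gives the second bullet of the theorem.

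For the first bullet I would pass from the associated graded to the filtration itself. By Theorem \ref{lss} the coboundary $\delta : H^n(X,\underline{\mathbb{C}}^*) \to H^{n+1}(X,\mathbb{Z})$ sends $F^{p,n}(\underline{\mathbb{C}}^*)$ into $F^{p,n+1}(\mathbb{Z})$, so it restricts to maps of filtered pieces, and on the quotients $F^{p,p+q}/F^{p+1,p+q}$ it is exactly $\delta_\infty = \delta_\infty^{p,q} : E_\infty^{p,q}(\underline{\mathbb{C}}^*) \to E_\infty^{p,q+1}(\mathbb{Z})$. Both filtrations are finite, so I would do a downward induction on $p$ (from $p = n+1$, where everything is zero) using the five-lemma on the diagram
\begin{equation*}\xymatrix{
0 \ar[r] & F^{p+1,p+q}(\underline{\mathbb{C}}^*) \ar[r] \ar[d]^{\delta} & F^{p,p+q}(\underline{\mathbb{C}}^*) \ar[r] \ar[d]^{\delta} & E_\infty^{p,q}(\underline{\mathbb{C}}^*) \ar[r] \ar[d]^{\delta_\infty} & 0 \\
0 \ar[r] & F^{p+1,p+q+1}(\mathbb{Z}) \ar[r] & F^{p,p+q+1}(\mathbb{Z}) \ar[r] & E_\infty^{p,q+1}(\mathbb{Z}) \ar[r] & 0
}
\end{equation*}
When $p + q \ge 1$, every $E_\infty^{p',q'}(\underline{\mathbb{C}}^*) \to E_\infty^{p',q'+1}(\mathbb{Z})$ appearing along the induction has $q' \ge 1$ except possibly the single bottom piece $E_\infty^{p+q,0}$ with $q = 0$; but that piece maps to $E_\infty^{p+q,1}$ which, being an iso, forces the comparison on the whole filtration to be an iso by the five-lemma once the quotient maps above it are isos — here I would have to be slightly careful that a surjection-not-injection at the very bottom step does not spoil injectivity higher up, which works because the bottom filtered piece $F^{n+1,n+1}(\mathbb{Z}) = 0$, so that step is vacuous and the first genuine step already has $q \ge 1$. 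When $p = q = 0$ the statement is only surjectivity of $\delta : H^0(X,\underline{\mathbb{C}}^*) = F^{0,0}(\underline{\mathbb{C}}^*) \to F^{0,1}(\mathbb{Z})$, which follows from the long exact sheaf-cohomology sequence together with $H^1(X,\underline{\mathbb{C}}) = 0$ (or equivalently from surjectivity of $\delta_\infty^{0,0}$ plus the iso on $F^{1,1}$). The main obstacle I anticipate is purely the edge-effect bookkeeping at $q = 0$: ensuring that the lone non-isomorphism at the bottom of each filtration does not propagate, which is resolved by the vanishing of the topmost filtered piece of $H^{n+1}(X,\mathbb{Z})$ making the first induction step trivial; everything else is a diagram chase powered by Theorem \ref{lss} and the softness of $\underline{\mathbb{C}}$.
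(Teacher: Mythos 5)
Your setup matches the paper's (apply Theorem \ref{lss} to the exponential sequence; $\delta_2$ is an isomorphism for $q \ge 1$ because $R^qf_*\underline{\mathbb{C}} = 0$ for $q \ge 1$), but the step you wave at with ``the five-lemma on the relevant piece of the $d_r$-complex handles it'' is exactly where the argument breaks. To get \emph{surjectivity} of $\delta_{r+1}^{p,q}$ you take a $d_r$-cocycle $u \in E_r^{p,q+1}(\mathbb{Z})$, lift it through $\delta_r^{p,q}$ to some $x$, and must check $d_r x = 0$; but $d_r x$ lives in $E_r^{p+r,q-r+1}(\underline{\mathbb{C}}^*)$, and when $q = r-1$ this is the row $q'=0$, where $\delta_r$ is not known to be injective, so $\delta_r(d_r x) = 0$ does not force $d_r x = 0$ --- and since $\delta_r^{p,q}$ is injective there is no freedom to modify $x$. (Relatedly, your claim that $\delta_2^{p,0}$ is surjective ``because it is induced by a surjection of sheaves'' is unjustified for $p \ge 1$: the cokernel is controlled by $H^{p+1}(Y, f_*\underline{\mathbb{C}}/f_*\mathbb{Z}) \cong H^{p+2}(Y,f_*\mathbb{Z})$, which need not vanish.) The paper avoids this entirely: at finite pages it proves only \emph{injectivity} of $\delta_r$ for $q \ge 1$ (a chase needing surjectivity only at the spot $(p-r,q+r-1)$, whose second index is $\ge 2$), and it obtains surjectivity only at $E_\infty$ by a separate argument on the actual cohomology groups, using that $\delta : H^n(X,\underline{\mathbb{C}}^*) \to H^{n+1}(X,\mathbb{Z})$ is onto (since $H^{n+1}(X,\underline{\mathbb{C}})=0$) and then forcing the lift down the filtration via the already-established injectivity of $\delta_\infty$.

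Your argument for the first bullet has a second, independent gap. The filtration of $H^{p+q+1}(X,\mathbb{Z})$ has one more step than that of $H^{p+q}(X,\underline{\mathbb{C}}^*)$: its deepest piece is $F^{p+q+1,p+q+1}(\mathbb{Z}) = E_\infty^{p+q+1,0}(\mathbb{Z})$, which is \emph{not} zero in general --- it is $F^{n+2,n+1}(\mathbb{Z})$ that vanishes, not $F^{n+1,n+1}(\mathbb{Z})$ --- so your claim that ``the bottom step is vacuous'' fails. Under $\delta_\infty$ the deepest graded piece of the source, $E_\infty^{p+q,0}(\underline{\mathbb{C}}^*)$, maps to $E_\infty^{p+q,1}(\mathbb{Z})$, so no graded piece of the source covers $E_\infty^{p+q+1,0}(\mathbb{Z})$, and the downward five-lemma induction can only show that $\delta$ surjects onto $F^{p,p+q+1}(\mathbb{Z})/F^{p+q+1,p+q+1}(\mathbb{Z})$. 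This is precisely where the paper does extra work: it shows every class in $F^{p+q+1,p+q+1}(\mathbb{Z})$ equals $\delta(y)$ for some $y \in F^{p+q,p+q}(\underline{\mathbb{C}}^*)$ by lifting through the surjection $\delta$ on $H^{p+q}(X,\underline{\mathbb{C}}^*)$ and pushing $y$ down the filtration using injectivity of $\delta_\infty$. (By contrast, injectivity of $\delta$ on $F^{p,p+q}$ for $p+q \ge 1$ needs no graded analysis at all: it is the restriction of the injective map $\delta$ on $H^{p+q}(X,\underline{\mathbb{C}}^*)$, which is the clean way around the non-injective bottom graded piece.)
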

\begin{proof}
We apply Theorem \ref{lss} to the exponential sequence $0 \to \mathbb{Z} \to \underline{\mathbb{C}} \to \underline{\mathbb{C}}^* \to 0$. Thus the coboundary $\delta : H^n(X,\underline{\mathbb{C}}^*) \to H^{n+1}(X,\mathbb{Z})$ restricts to morphisms $\delta : F^{p,p+q}(\underline{\mathbb{C}}^*) \to F^{p,p+q+1}(\mathbb{Z})$ and induces quotient maps $\delta_\infty : E_\infty^{p,q}(\underline{\mathbb{C}}^*) \to E_\infty^{p,q+1}(\mathbb{Z})$. Next we observe that $E_2^{p,q}(\mathbb{Z}) = H^p(Y , R^q f_* \mathbb{Z})$ and $E_2^{p,q}(\underline{\mathbb{C}}^*) = H^p(Y , R^qf_* \underline{\mathbb{C}}^*)$. The natural maps $R^qf_* \underline{\mathbb{C}}^* \to R^{q+1}f_* \mathbb{Z}$ in the long exact sequence of higher direct image sheaves are isomorphisms for $q \ge 1$, since $R^q f_* \underline{\mathbb{C}} = 0$ for $q \ge 1$. We therefore have that $\delta_2 : E_2^{p,q}(\underline{\mathbb{C}}^*) \to E_2^{p,q+1}(\mathbb{Z})$ is an isomorphism for $q \ge 1$.\\

We will now show that for $q \ge 1$ the maps $\delta_\infty : E_\infty^{p,q}(\underline{\mathbb{C}}^*) \to E_\infty^{p,q+1}(\mathbb{Z})$ are injective. In fact we start by showing that the maps $\delta_3 : E_3^{p,q}(\underline{\mathbb{C}}^*) \to E_3^{p,q+1}(\mathbb{Z})$ are injective for $q \ge 1$. Let $x \in E_3^{p,q}(\underline{\mathbb{C}}^*)$ be such that $\delta_3(x) = 0$. Choose a representative $\tilde{x} \in E_2^{p,q}(\underline{\mathbb{C}}^*)$ for $x$. Then $\delta_3(x) = 0$ means that $\delta_2(\tilde{x}) = d_2 \tilde{y}$ for some $\tilde{y} \in E_2^{p-2,q+2}(\mathbb{Z})$. We can then find $\tilde{z} \in E_2^{p-2,q+1}(\underline{\mathbb{C}}^*)$ so that $\tilde{y} = \delta_2(\tilde{z})$ and thus $\delta_2(\tilde{x}) = d_2 \tilde{y} = d_2 \delta_2 \tilde{z} = \delta_2 d_2 \tilde{z}$. By injectivity of $\delta_2$ we have $\tilde{x} = d_2 \tilde{z}$ and thus $x = 0$ proving injectivity of $\delta_3$. Proceeding by induction we find that $\delta_r : E_r^{p,q}(\underline{\mathbb{C}}^*) \to E_r^{p,q+1}(\mathbb{Z})$ is injective for all $r \ge 2$ and $q \ge 1$.\\

Next we show that the maps $\delta_\infty : E_\infty^{p,q}(\underline{\mathbb{C}}^*) \to E_\infty^{p,q+1}(\mathbb{Z})$ are surjective for all $p,q$. To begin let $x \in E_\infty^{0,q+1}(\mathbb{Z})$ and lift $x$ to a class $\tilde{x} \in F^{0,q+1}(\mathbb{Z}) = H^{q+1}(X,\mathbb{Z})$. Since $\delta : H^q(X , \underline{\mathbb{C}}^* ) \to H^{q+1}(X,\mathbb{Z})$ is surjective we can find $\tilde{y} \in H^q(X , \underline{\mathbb{C}}^* )$ so that $\delta(\tilde{y}) = \tilde{x}$. Projecting $\tilde{y}$ to a class $y \in E_\infty^{0,q}(\underline{\mathbb{C}}^*)$ we find that $\delta_\infty(y) = x$ proving surjectivity in the $p=0$ case. Now we proceed by induction on $p$, so assume that $\delta_\infty : E_\infty^{k,q}(\underline{\mathbb{C}}^*) \to E_\infty^{k,q+1}(\mathbb{Z})$ is surjective for all $q$ and all $k \le p-1$, where now $p > 0$. Given $x \in E_\infty^{p,q+1}(\mathbb{Z})$ lift $x$ to a class $\tilde{x} \in F^{p,p+q+1}(\mathbb{Z}) \subseteq H^{p+q+1}(X,\mathbb{Z})$. Then since $\delta : H^{p+q}(X,\underline{\mathbb{C}}^*) \to H^{p+q+1}(X,\mathbb{Z})$ is surjective we can find $\tilde{y} \in H^{p+q}(X,\underline{\mathbb{C}}^*)$ such that $\delta \tilde{y} = \tilde{x}$. To proceed we need to argue that $\tilde{y} \in F^{p,p+q}(\underline{\mathbb{C}}^*)$. Notice that $\tilde{y} \in F^{0,p+q}(\underline{\mathbb{C}}^*) = H^{p+q}(X,\underline{\mathbb{C}}^*)$. Using the fact that $\delta_\infty : E_\infty^{0,p+q}(\underline{\mathbb{C}}^*) \to E_\infty^{0,p+q+1}(\mathbb{Z})$ is injective (since $p+q > 0$) and that the image of $\tilde{x}$ in $E_\infty^{0,p+q+1}(\mathbb{Z})$ is zero we see that correspondingly the image of $\tilde{y}$ in $E_\infty^{0,p+q}(\underline{\mathbb{C}}^*)$ is zero and thus $\tilde{y} \in F^{1,p+q}(\underline{\mathbb{C}}^*)$. Continuing in this fashion using the injectivity of the $\delta_\infty : E_\infty^{a,b}(\underline{\mathbb{C}}^*) \to E_\infty^{a,b+1}(\mathbb{Z})$ for $b \ge 1$ we get that $\tilde{y} \in F^{2,p+q}(\underline{\mathbb{C}}^*)$, $\tilde{y} \in F^{3,p+q}(\underline{\mathbb{C}}^*), \dots$ and eventually get that $\tilde{y} \in F^{p,p+q}(\underline{\mathbb{C}}^*)$. Projecting $\tilde{y}$ to $y \in E_\infty^{p,q}(\underline{\mathbb{C}}^*)$ we immediately see that $\delta_\infty(y) = x$ proving surjectivity of $\delta_\infty$.\\

Using the fact that the $\delta_\infty : E_\infty^{p,q}(\underline{\mathbb{C}}^*) \to E_\infty^{p,q+1}(\mathbb{Z})$ are surjective for all $p,q$ we easily see that the maps $\delta : F^{p,p+q}(\underline{\mathbb{C}}^*) \to F^{p,p+q+1}(\mathbb{Z})$ surject onto the quotients $F^{p,p+q+1}(\mathbb{Z})/F^{p+q+1,p+q+1}(\mathbb{Z})$ for all $p,q$. Given this we can show the maps $\delta_\infty$ are surjective by showing that every class $x \in F^{p+q+1,p+q+1}(\mathbb{Z})$ has the form $x = \delta(y)$ for some $y \in F^{p+q,p+q}(\underline{\mathbb{C}}^*)$. Since $\delta : H^{p+q}(X,\underline{\mathbb{C}}^*) \to H^{p+q+1}(X,\mathbb{Z})$ is surjective we can find $y \in H^{p+q}(X,\underline{\mathbb{C}}^*)$ so that $x = \delta(y)$. If we project $y$ to a class in $E_\infty^{0,p+q}(\underline{\mathbb{C}}^*)$ and use injectivity of $\delta_\infty : E_\infty^{0,p+q}(\underline{\mathbb{C}}^*) \to E_\infty^{0,p+q+1}(\mathbb{Z})$ (if $p+q \ge 1$) we find that the projection of $y$ to $E_\infty^{0,p+q}(\underline{\mathbb{C}}^*)$ is zero so that $y \in F^{p+q-1,p+q}(\underline{\mathbb{C}}^*)$. Continuing on in this fashion we work our way down the filtration and ultimately find that $y \in F^{p+q,p+q}(\underline{\mathbb{C}}^*)$ as claimed.\\

Finally to see that the maps $\delta : F^{p,p+q}(\underline{\mathbb{C}}^*) \to F^{p,p+q+1}(\mathbb{Z})$ are injective whenever $p+q \ge 1$ we just need to note that $\delta : H^{p+q}(X,\underline{\mathbb{C}}^*) \to H^{p+q+1}(X,\mathbb{Z})$ is injective, indeed an isomorphism whenever $p+q \ge 1$.
\end{proof}

\begin{remark}
Observe that in the proof of Theorem \ref{cz} the only property of the exponential sequence $0 \to \mathbb{Z} \to \underline{\mathbb{C}} \to \underline{\mathbb{C}}^* \to 0$ that is used is that the sheaf $\underline{\mathbb{C}}$ on $X$ has the property that its restriction to any open subset of $X$ is acyclic. Thus the result carries over to many other short exact sequences of sheaves. For example given $\epsilon \in H^1(X,\mathbb{Z}_2)$, tensoring by the corresponding local system $\mathbb{Z}_\epsilon$ yields an exact sequence $0 \to \mathbb{Z}_\epsilon \to \underline{\mathbb{C}}_\epsilon \to \underline{\mathbb{C}}^*_\epsilon \to 0$ that also satisfies this condition.
\end{remark}

Theorem \ref{cz} has applications to topological T-duality. Suppose that $f : X \to Y$ is a principal $T^n$-bundle over $Y$ where $T^n = \mathbb{R}^n/\mathbb{Z}^n$ is the $n$-torus and suppose $\mathcal{G}$ is a bundle gerbe on $X$ \cite{mm},\cite{stev} which up to stable isomorphism is classified by its Dixmier-Douady class $h = [\mathcal{G}] \in H^2(X,\underline{\mathbb{C}}^*) = H^3(X,\mathbb{Z})$. Bunke, Rumpf and Schick \cite{brs} give a definition of topological $T$-duality for the data $(f : X \to Y, \mathcal{G})$ building upon the definition of topological $T$-duality first introduced in \cite{bem}. In \cite{brs} it is established that $(X,\mathcal{G})$ admits a T-dual (in the sense of \cite{brs}) if and only if $h = [\mathcal{G}] \in F^{2,3}(\mathbb{Z})$. From Theorem \ref{cz} we see that this is equivalent to $h \in F^{2,2}(\underline{\mathbb{C}}^*)$ if we regard $h$ as an element of $H^2(X,\underline{\mathbb{C}}^*)$. Next we observe that there is a natural surjection $E_2^{2,0}(\underline{\mathbb{C}}^*) \to F^{2,2}(\underline{\mathbb{C}}^*)$. Thus the T-dualizable classes on $X$ are precisely the image of $E_2^{2,0}(\underline{\mathbb{C}}^*) = H^2(Y, f_*(\underline{\mathbb{C}}^*))$ under the natural map $H^2(Y, f_*(\underline{\mathbb{C}}^*)) \to H^2(X,\underline{\mathbb{C}}^*)$. We claim that by describing T-dualizable gerbes in terms of classes in $H^2(Y, f_*(\underline{\mathbb{C}}^*))$ the proof of certain existence results in T-duality greatly simplify, a claim that we intend to show in \cite{bar1}.


\bibliographystyle{amsplain}

\end{document}